\numberwithin{equation}{section}
\title[Twisted moments of characteristic polynomials]{Twisted moments of characteristic polynomials of random matrices in the unitary group
}
\author{Siegfred Baluyot}
\address{East Carolina University, East Fifth Street Greenville, NC 27858, USA}
\email{\href{mailto:baluyots24@ecu.edu}{baluyots24@ecu.edu}}
\author{Brian Conrey}
\address{American Institute of Mathematics \\
Caltech 8-32 1200 E California Blvd Pasadena, CA 91125}
\email{\href{mailto:conrey@aimath.org}{conrey@aimath.org}}
\subjclass[2020]{05E10, 11M50, 15B52}
\thanks{SB was partially supported by NSF DMS-1854398 FRG. BC is partially supported by a grant from the NSF}
\newtheorem{theorem}{Theorem}[section]
\newtheorem{lemma}[theorem]{Lemma}
\theoremstyle{definition}
\newtheorem*{definition}{Definitions}
\begin{document}

\begin{abstract}
Recently, Keating and the second author of this paper devised a heuristic for predicting asymptotic formulas for moments of the Riemann zeta-function $\zeta(s)$. Their approach indicates how lower twisted moments of $\zeta(s)$ may be used to evaluate higher moments. In this paper, we present a rigorous random matrix theory analogue of their heuristic. To do this, we develop a notion of ``twisted moment'' of characteristic polynomials of matrices in the unitary group $U(N)$, and we prove several identities involving Schur polynomials. Our results may be viewed as a proof of concept of the heuristic for $\zeta(s)$.
\end{abstract}

\maketitle

\section{Introduction and results}

In this paper, we present a rigorous random matrix theory analogue of the heuristic in \cite{ck5}, which was devised by Keating and the second author of this paper as an approach towards finding asymptotic formulas for moments of the Riemann zeta-function. Our results may be viewed as a proof of concept of their heuristic.

Since the monumental discovery of Dyson and Montgomery~\cite{dyson,montgomery}, random matrix theory has become a useful predictive tool in the study of $L$-functions. Keating and Snaith, in their seminal work~\cite{ks1}, applied random matrix theory to predict the leading terms of asymptotic formulas for the moments
\begin{equation}\label{eqn: zetamoment}
\int_0^T|\zeta(\tfrac{1}{2}+it)|^{2r} \,dt
\end{equation}
of the Riemann zeta function $\zeta(s)$ when $r$ is any fixed complex number with real part $>-1/2$. At that time, number theoretic heuristics involving long Dirichlet polynomials have resulted in predictions for the leading term for $r=3,4$ but failed to give a feasible prediction for $r=5$~\cite{conreyghosh,conreygonek}. To date, asymptotic formulas for \eqref{eqn: zetamoment} can be proved only for $r=0$ (trivially), $r=1$, and $r=2$~\cite{titchmarsh}. Remarkably, the predictions of Keating and Snaith agree with the known formulas and the long Dirichlet polynomial predictions for $r=0,1,2,3,4$.

Today, there are several different heuristics that predict precise asymptotic formulas for \eqref{eqn: zetamoment} and agree with the conjectures of Keating and Snaith. One of these heuristics is the \textit{recipe} in \cite{cfkrs}, which also predicts lower order terms and applies to a general family of $L$-functions. While the recipe seems to give the correct answer, its individual steps may be unjustifiable. This is because some steps in the recipe may throw away terms that are of the same size as the main term, while other steps bring back other terms of the same size~\cite[Section~2.1]{cfkrs}. Curiously, these large errors in the individual steps of the recipe seem to cancel out to give the correct answer. 

In order to investigate how and why the recipe works, Keating and the second author of this paper revisited the long Dirichlet polynomial approach. In a series of papers~\cite{ck1,ck2,ck3,ck4,ck5}, they introduced a new heuristic that recovers the recipe prediction for averages of Dirichlet polynomial approximations of products of shifted zeta-functions. These averages are given by
\begin{equation}\label{eqn: dirichletpolynomialapprox}
\frac{1}{T}\int_0^{\infty}\psi\left( \frac{t}{T}\right) \sum_{m\leq X} \frac{\tau_A (m) }{m^{\frac{1}{2}+it}}  \sum_{n\leq X} \frac{\tau_B (n) }{n^{\frac{1}{2}-it}} \,dt,
\end{equation}
where $\psi$ is a smooth function with compact support, $X$ is a parameter tending to $\infty$, $A$ and $B$ are finite multisets of small complex numbers, and the coefficients $\tau_A$ are defined for finite multisets $A$ by
\begin{equation*}
\sum_{m=1}^{\infty}\frac{\tau_A(m)}{m^s} =\prod_{\alpha\in A} \zeta(s+\alpha),
\end{equation*}
where the product counts multiplicity. Hence, \eqref{eqn: dirichletpolynomialapprox} is an approximation to the shifted moment
\begin{equation*}
\frac{1}{T}\int_0^{\infty}\psi\left( \frac{t}{T}\right) \prod_{\alpha\in A} \zeta(\tfrac{1}{2}+\alpha+it)  \prod_{\beta\in A} \zeta(\tfrac{1}{2}+\beta-it) \,dt.
\end{equation*}
To keep our discussion of the ideas in \cite{ck5} concise, let us restrict our attention to the special case in which $A$ and $B$ are sets. For each integer $\ell$ with $1\leq \ell\leq \min\{|A|,|B|\}$, we may partition $A$ into $\ell$ nonempty sets to write $A=A_1\cup\cdots \cup A_{\ell}$ and express $\tau_A$ as the Dirichlet convolution $\tau_A=\tau_{A_1}*\cdots *\tau_{A_{\ell}}$. We may do the same for $\tau_B$ and thus write \eqref{eqn: dirichletpolynomialapprox} as
\begin{align}
\frac{1}{T}\int_0^{\infty} \psi\left( \frac{t}{T}\right)
& \sum_{m_1 m_2 \cdots m_{\ell} \leq X}  \frac{\tau_{A_1} (m_1)\tau_{A_2} (m_2)\cdots \tau_{A_{\ell}} (m_{\ell})   }{(m_1 m_2\cdots m_{\ell} )^{\frac{1}{2}+it}  } \notag \\
& \times \sum_{n_1 n_2 \cdots n_{\ell} \leq X} \frac{  \tau_{B_1} (n_1)\tau_{B_2} (n_2)\cdots \tau_{B_{\ell}} (n_{\ell}) }{  (n_1 n_2 \cdots n_{\ell})^{\frac{1}{2}-it}} \,dt.  \label{eqn: dirichletpolynomialapprox2}
\end{align}
The basic idea behind the approach in \cite{ck5} is to introduce positive integer parameters $M_j,N_j$ for $j=1,2,\dots,\ell$, collect the terms in \eqref{eqn: dirichletpolynomialapprox2} that have $m_j/n_j$ close to $M_j/N_j$ for all $j$, and then sum over all possible values of the $M_j,N_j$ such that $M_1M_2\cdots M_{\ell}= N_1N_2\cdots N_{\ell}$ and $(M_j,N_j)=1$ for all $j$. This involves summing the terms with $m_jN_j-n_jM_j=h_j$, where $h_j$ is an integer variable, and then summing over the variables $h_j$. This heuristic was inspired by the work of Bogomolny and Keating~\cite{bk1,bk2}, who used a similar decomposition of sums over primes to study the $n$-point correlation of zeros of $\zeta(s)$. To evaluate the sums over the $M_j,N_j$, the approach in \cite{ck5} uses Euler products and the key identity
\begin{equation}\label{eqn: unitaryidentityzeta}
\sum_{m=0}^{\infty} \frac{\tau_A(p^m)\tau_B(p^m)}{p^m} = \sum_{\substack{m_1+m_2+\cdots +m_{\ell}  = n_1+n_2+\cdots +n_{\ell} \\ \min\{m_j,n_j\} = 0  \ \ \forall j}} \ \prod_{j=1}^{\ell} \Bigg( \sum_{r=0}^{\infty} \frac{\tau_{A_{j}}(p^{r+m_j}) \tau_{B_{j}}(p^{r+n_j}) }{p^{ r + \frac{1}{2}m_j +\frac{1}{2} n_j} }\Bigg) 
\end{equation}
(see Lemma~3 of \cite{ck5} and its refinement in the proof of Theorem~4.4 of \cite{bc}).

The condition $m_jN_j-n_jM_j=h_j$ arises when estimating (or attempting to estimate) the twisted moment
\begin{equation*}
\frac{1}{T}\int_0^{\infty}\psi\left( \frac{t}{T}\right) \left( \frac{M}{N}\right)^{it} \prod_{\alpha\in A_j} \zeta(\tfrac{1}{2}+\alpha+it)  \prod_{\beta\in B_j} \zeta(\tfrac{1}{2}+\beta-it)\,dt
\end{equation*}
via the standard approach of using Dirichlet polynomial approximations or approximate functional equations (see, for example, \cite{hughesyoung}). Therefore, the heuristic in \cite{ck5} essentially combines lower twisted moments to find an asymptotic formula for higher (untwisted) moments.

In this paper, we show that this can be done rigorously with the Keating-Snaith random matrix model for moments of $\zeta(s)$. To do this, we introduce a notion of ``twisted moment'' of characteristic polynomials of matrices in the unitary group $U(N)$ (equation \eqref{eqn: twistedmomentdefinition} below). With our definition of twisted moment, we show that (untwisted) moments of characteristic polynomials of matrices in $U(N)$ can be expressed as sums of products of lower twisted moments (Theorem~\ref{thm: BK} below). We also evaluate these twisted moments (Theorem~\ref{thm: twistedrecipe} below), thus generalizing the formula for (untwisted) moments due to Farmer, Keating, Rubinstein, Snaith, and the second author of this paper~\cite{cfkrs2,ks1}. We use Theorem~\ref{thm: BK} and Theorem~\ref{thm: twistedrecipe} for lower twisted moments to prove the formula for higher (untwisted) moments in Section~\ref{sec: recipe} below, thereby giving a proof of concept of the heuristic in \cite{ck5}.

We now define the twisted moments of characteristic polynomials of matrices in $U(N)$.

\begin{definition}
If $n$ is any positive integer, then a \textit{dominant weight of length $n$} is an $n$-tuple $\lambda=(\lambda_1,\lambda_2,\dots,\lambda_n)$ of integers such that
\begin{equation*}
\lambda_1 \geq \lambda_2 \geq \dots \geq \lambda_n.
\end{equation*}
For a dominant weight $\lambda=(\lambda_1,\lambda_2,\dots,\lambda_n)$ of length $n$, we define the \textit{generalized Schur function} $s_{\lambda}$ of $n$ variables $x_1,x_2,\dots,x_n$ by
\begin{equation}\label{eqn: genschurdef}
s_{\lambda}(x_1,x_2,\dots,x_n):= \frac{\det\Big(x_j^{n-k+\lambda_k} \Big)_{1\leq j,k\leq n} }{ \det\Big(x_j^{n-k} \Big)_{1\leq j,k\leq n} }.
\end{equation}
If $N$ is a positive integer, $A$ and $B$ are finite multisets of complex numbers, and $\lambda$ is a dominant weight of length $N$, then we define the \textit{twisted moment} $\mathcal{M}_N(A,B;\lambda)$ by
\begin{equation}\label{eqn: twistedmomentdefinition}
\mathcal{M}_N(A,B;\lambda) : = \int_{U(N)} \prod_{\alpha \in A} \det(I-\alpha g^*) \prod_{\beta \in B} \det(I-\beta g) s_{\lambda} (t_1,t_2,\dots,t_N) \,dg,
\end{equation}
where the products count multiplicity, the integral is over the unitary group $U(N)$ equipped with the Haar measure $dg$ that is normalized so that $U(N)$ has volume $1$, $I$ is the $N\times N$ identity matrix, $g^*$ is the conjugate transpose of $g$, and the numbers $t_1,t_2,\dots,t_N$ are the eigenvalues of $g$, each repeated according to its multiplicity.
\end{definition}

A few comments about these definitions are in order. Bump and Gamburd~\cite[p. 232]{bumpgamburd} use the term ``dominant weight'' to refer to finite decreasing sequences of integers, and we emulate their use of the term. Every partition in symmetric function theory is a dominant weight when written as a finite sequence, i.e., when we do not include an infinite number of zeros in the definition of the partition. The standard definition of length of a partition is the number of its positive parts, and so its length as a dominant weight may be different from its length as a partition. However, we will always use the latter when referring to the length of a partition. If the dominant weight $\lambda = (\lambda_1,\lambda_2,\dots,\lambda_n)$ is a partition, i.e., if it satisfies $\lambda_n\geq 0$, then the generalized Schur function $s_{\lambda}(x_1,x_2,\dots,x_n)$ is precisely the Schur polynomial in the variables $x_1,x_2,\dots,x_n$ that is indexed by $\lambda$, because \eqref{eqn: genschurdef} is Jacobi's bialernant formula that defines the Schur polynomial in this case (see, for example, Chapter I, \S3 of \cite{macdonald}). We adopt standard conventions in symmetric function theory and consider two partitions to be the same if they differ by only a string of zeros at the end of the sequence. Thus, for example, $(3,2)$, $(3,2,0,0)$, and $(3,2,0,0,\dots)$ are the same partition. We also adhere to the convention that if $\mu$ is a partition of length greater than a positive integer $m$, then $s_{\mu}(y_1,y_2,\dots,y_m)$ is defined to be zero for any $y_1,y_2,\dots,y_m$. On the other hand, every generalized Schur function in this paper that is not a Schur polynomial is indexed by a dominant weight of length equal to the number of variables in its argument. From the definition \eqref{eqn: genschurdef} and properties of the determinant, we see that generalized Schur functions $s_{\lambda}(x_1,x_2,\dots,x_n)$ are invariant under permutations of the variables $x_1,x_2,\dots,x_n$. Thus, the twisted moment \eqref{eqn: twistedmomentdefinition} does not depend on the order of the eigenvalues $t_j$. The (untwisted) moment that models moments of $\zeta(s)$ (see, for example, \cite{ks1} or \cite{cfkrs2}) is $\mathcal{M}_N(A,B;0)$, where we use $0$ to denote the zero partition $(0,0,\dots)$.

Having defined the twisted moment \eqref{eqn: twistedmomentdefinition}, we may now state our main device in carrying out a rigorous random matrix theory analogue of the heuristic in \cite{ck5}. We call this device the ``BK~Splitting'' after Bogomolny and Keating, whose works~\cite{bk1,bk2} inspired \cite{ck1,ck2,ck3,ck4,ck5} and in turn this paper. Recall that if $C$ and $D$ are multisets, then the multiset sum $C+D$ is the multiset such that the multiplicity of each $e$ in $C+D$ is equal to the multiplicity of $e$ in $C$ plus the multiplicity of $e$ in $D$.

\begin{theorem}[The BK Splitting]\label{thm: BK}
Let $N$ be a positive integer, and suppose that $A,B$ are finite multisets of complex numbers. Suppose that $A$ equals the multiset sum $A=A_1+ A_2$ and that $B$ equals the multiset sum $B=B_1 + B_2$. Let $\mathcal{M}_N$ be defined by \eqref{eqn: twistedmomentdefinition}. Then
$$
\mathcal{M}_N(A,B;0) = \sum_{\lambda} \mathcal{M}_N(A_1,B_1;\lambda) \mathcal{M}_N(A_2,B_2;-\lambda),
$$
where $0$ denotes the zero partition $(0,0,\dots)$, the sum on the right hand side is over all dominant weights $\lambda$ of length $N$, and if $\lambda=(\lambda_1,\lambda_2,\dots,\lambda_N)$ is a dominant weight, then $-\lambda$ denotes the dominant weight $(-\lambda_N,-\lambda_{N-1}\dots,-\lambda_1)$. The summand on the right-hand side is nonzero for at most finitely many $\lambda$.
\end{theorem}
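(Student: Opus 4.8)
The plan is to expand the integrand of $\mathcal{M}_N(A,B;0)$ in generalized Schur functions and then invoke orthonormality. Since $A=A_1+A_2$, $B=B_1+B_2$, and $s_0\equiv 1$, the left-hand side is $\int_{U(N)}F_1(g)F_2(g)\,dg$, where
$$
F_1(g):=\prod_{\alpha\in A_1}\det(I-\alpha g^*)\prod_{\beta\in B_1}\det(I-\beta g),\qquad F_2(g):=\prod_{\alpha\in A_2}\det(I-\alpha g^*)\prod_{\beta\in B_2}\det(I-\beta g).
$$
For $g\in U(N)$ with eigenvalues $t_1,\dots,t_N$ we have $\overline{t_j}=t_j^{-1}$, so $F_i(g)=\prod_{\alpha\in A_i}\prod_{j=1}^{N}(1-\alpha t_j^{-1})\prod_{\beta\in B_i}\prod_{j=1}^{N}(1-\beta t_j)$ is a symmetric Laurent polynomial in $t_1,\dots,t_N$ of bounded degree; equivalently, it is a class function on $U(N)$ lying in the finite-dimensional span of the functions $g\mapsto s_\lambda(t_1,\dots,t_N)$, as $\lambda$ runs over dominant weights of length $N$.

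First I would record two standard facts. (i) The functions $g\mapsto s_\lambda(t_1,\dots,t_N)$, with $\lambda$ a dominant weight of length $N$, are the irreducible characters of $U(N)$ and form an orthonormal system for the Haar measure $dg$ (this is Weyl's integration formula together with the orthogonality of Schur polynomials, or the Peter--Weyl theorem). (ii) For $g\in U(N)$ one has $\overline{s_\lambda(t_1,\dots,t_N)}=s_{-\lambda}(t_1,\dots,t_N)$, where $-\lambda=(-\lambda_N,\dots,-\lambda_1)$. Fact (ii) follows from $\overline{t_j}=t_j^{-1}$ and a direct manipulation of the bialternant formula \eqref{eqn: genschurdef}: substituting $x_j\mapsto x_j^{-1}$ and then multiplying each row $j$ by $x_j^{N-1}$ (a factor that cancels between numerator and denominator) turns the numerator into $\det(x_j^{k-1-\lambda_k})_{1\le j,k\le N}$ and the denominator into $\det(x_j^{k-1})_{1\le j,k\le N}$; reversing the order of the $N$ columns of each determinant (a common sign that again cancels) produces exactly the numerator and denominator defining $s_{-\lambda}(x_1,\dots,x_N)$.

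Next I would write the finite expansion $F_1(g)=\sum_\mu c_\mu\, s_\mu(t_1,\dots,t_N)$, the sum over dominant weights of length $N$ with only finitely many $c_\mu$ nonzero, since $F_1$ is a Laurent polynomial of bounded degree. By Fact (i) and Fact (ii),
$$
c_\mu=\int_{U(N)}F_1(g)\,\overline{s_\mu(t_1,\dots,t_N)}\,dg=\int_{U(N)}F_1(g)\,s_{-\mu}(t_1,\dots,t_N)\,dg=\mathcal{M}_N(A_1,B_1;-\mu).
$$
Substituting this finite sum into $\int_{U(N)}F_1(g)F_2(g)\,dg$ and using $\int_{U(N)}F_2(g)\,s_\mu(t_1,\dots,t_N)\,dg=\mathcal{M}_N(A_2,B_2;\mu)$ gives
$$
\mathcal{M}_N(A,B;0)=\sum_\mu\mathcal{M}_N(A_1,B_1;-\mu)\,\mathcal{M}_N(A_2,B_2;\mu).
$$
Since $\mu\mapsto-\mu$ is a bijection of the set of dominant weights of length $N$ onto itself, replacing $\mu$ by $-\lambda$ yields the stated identity; and since only finitely many $c_\mu$ are nonzero, only finitely many summands on the right are nonzero, which is the final assertion.

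The only step I would expect to require care is the bookkeeping behind Fact (i): one must confirm that allowing dominant weights with negative parts produces \emph{all} irreducible (rational, not merely polynomial) characters of $U(N)$, so that $\{s_\lambda\}$ is a genuine orthonormal basis of the space of symmetric Laurent polynomials in the eigenvalues — this is the substantive input, standard but essential. Everything else is the formal computation above, the one genuinely algebraic ingredient being the identity $s_\lambda(x_1^{-1},\dots,x_N^{-1})=s_{-\lambda}(x_1,\dots,x_N)$ from Fact (ii).
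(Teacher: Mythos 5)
Your proof is correct and follows the same Parseval-identity strategy the paper uses: expand one of the two factors in the orthonormal basis of generalized Schur functions, identify the Fourier coefficients as twisted moments via $\overline{s_\lambda}=s_{-\lambda}$, and then integrate against the other factor. The only cosmetic difference is that you expand $F_1$ where the paper expands $F_2$, and you correctly flag the one substantive input (that the $s_\lambda$ span all symmetric Laurent polynomials and are orthonormal), which the paper isolates as Lemma~\ref{lem: fourierexpansion} and Lemma~\ref{lem: schurorthogonal}.
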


The idea in our proof of the BK Splitting in Section~\ref{sec: BK} may be summarized as follows. The generalized Schur functions $s_{\lambda}$ form an orthogonal basis of the inner product space consisting of complex functions on $U(N)$ that are polynomial functions of the eigenvalues of the matrix and their reciprocals, with inner product given by $\langle F_1,F_2 \rangle =\int_{U(N)}F_1(g)\overline{F_2(g)}\,dg $, and the BK Splitting is the Parseval identity applied to the functions
\begin{equation*}
F_1(g) =\prod_{\alpha \in A_1} \det(I-\alpha g^*) \prod_{\beta \in B_1} \det(I-\beta g)
\end{equation*}
and
\begin{equation*}
F_2(g) =\prod_{\alpha \in A_2} \det(I-\alpha g^*) \prod_{\beta \in B_2} \det(I-\beta g).
\end{equation*}

To state the formula for $\mathcal{M}_N(A,B;0)$ from \cite{cfkrs2} that we will prove using the BK Splitting, we first introduce some notations. For finite multisets $A$ and $B$, define $Z(A,B)$ by
\begin{equation}\label{eqn: Zdef}
Z(A,B) =  \prod_{a\in A} \prod_{b\in B} \frac{1}{1-ab}.
\end{equation}
If $U$ and $V$ are finite multisets of complex numbers and $m$ is an integer, then we denote
\begin{equation}\label{eqn: UVNdef}
(UV)^m : = \prod_{u\in U} u^m \prod_{v\in V} v^m
\end{equation}
and
\begin{equation}\label{eqn: 1/Udef}
\frac{1}{U}:= \{ u^{-1} : u\in U\}.
\end{equation}
The products in \eqref{eqn: Zdef} and \eqref{eqn: UVNdef} count multiplicity. For ease of notation, we will state our results in the special case in which $A$ and $B$ are sets. The general results for multisets will follow immediately from analytic continuation. Using the notations \eqref{eqn: Zdef}, \eqref{eqn: UVNdef}, and \eqref{eqn: 1/Udef}, we see that equation (2.16) of \cite{cfkrs2} is equivalent to
\begin{equation}\label{eqn: recipe}
\mathcal{M}_N(A,B;0) = \sum_{ \substack{U\subseteq A \\ V\subseteq B \\ |U|=|V| } } (UV)^N Z (A\smallsetminus U \cup \tfrac{1}{V}, B\smallsetminus V \cup \tfrac{1}{U})
\end{equation}
in the special case in which $A$ and $B$ are sets of nonzero complex numbers such that $A$, $B$, $1/A$, and $1/B$ are pairwise disjoint, where the sum is over all pairs of subsets $U,V$ such that $U\subseteq A$, $V\subseteq B$, and $U$ and $V$ have the same cardinality. Note that, since \eqref{eqn: twistedmomentdefinition} is a polynomial in the elements of $A$ and $B$, it follows that \eqref{eqn: recipe} holds for any finite multisets $A$ and $B$ of complex numbers, without any restrictions, provided that the right-hand side of \eqref{eqn: recipe} is interpreted as its analytic continuation.

Our next theorem is a generalization of \eqref{eqn: recipe} to twisted moments, and allows us to prove \eqref{eqn: recipe} using the BK Splitting (Theorem~\ref{thm: BK}) and lower twisted moments, as we will see in Section~\ref{sec: recipe}.

\begin{theorem}\label{thm: twistedrecipe}
Let $N$ be a positive integer. Let $A$ and $B$ be finite sets of nonzero complex numbers such that $A$, $B$, $1/A$, and $1/B$ are pairwise disjoint. Suppose that $\lambda=(\lambda_1,\lambda_2,\dots,\lambda_N)$ is a dominant weight of length $N$. Then
\begin{equation}\label{eqn: twistedrecipe}
\mathcal{M}_N(A,B;\lambda) = \sum_{ \substack{U\subseteq A \\ V\subseteq B \\ |U|=|V| } } (UV)^N Z_{\lambda}(A\smallsetminus U \cup \tfrac{1}{V}, B\smallsetminus V \cup \tfrac{1}{U}),
\end{equation}
where $Z_{\lambda}(A,B)$ is defined by
\begin{align*}
Z_{\lambda}(A,B): = s_{\mu'}(-A)s_{\nu'}(-B) Z(A,B),
\end{align*}
where $Z(A,B)$ is defined by \eqref{eqn: Zdef}, the partitions $\mu$ and $\nu$ are defined by
\begin{equation}\label{eqn: mudef}
\mu = (\max\{\lambda_1,0\}, \max\{\lambda_2,0\}, \dots, \max\{\lambda_N,0\})
\end{equation}
and
\begin{equation}\label{eqn: nudef}
\nu = (\max\{-\lambda_N,0\}, \max\{-\lambda_{N-1},0\}, \dots, \max\{-\lambda_1,0\}),
\end{equation}
$\mu'$ and $\nu'$ denote the conjugate partitions of $\mu$ and $\nu$, respectively, and if $E=\{e_1,e_2,\dots,e_k\}$ is a multiset of complex numbers and $\rho$ is a partition, then $-E$ denotes the multiset $\{-e: e\in E\}$ and $s_{\rho}(E)$ denotes the Schur polynomial
\begin{equation}\label{eqn: schurpolynomialnotation}
s_{\rho}(E):=s_{\rho}(e_1,e_2,\dots,e_k).
\end{equation}
More generally, \eqref{eqn: twistedrecipe} holds for any finite multisets $A$ and $B$ of complex numbers, without any restrictions, provided that the right-hand side of \eqref{eqn: twistedrecipe} is interpreted as its analytic continuation.
\end{theorem}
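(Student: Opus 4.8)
The plan is to evaluate $\mathcal{M}_N(A,B;\lambda)$ by the Heine--Szeg\H{o}/Andr\'eief method and then to recognise the resulting determinant. First, since the left side of \eqref{eqn: twistedrecipe} is, by \eqref{eqn: twistedmomentdefinition}, a polynomial in the elements of $A$ and $B$, while the right side is a rational function of them, it suffices to prove the identity when $A,B$ are sets with $A,B,1/A,1/B$ pairwise disjoint and in general position; the remaining cases then follow by analytic continuation (which is how the last sentence of the theorem is to be read). Under this assumption I would write $\det(I-\beta g)=\prod_{j=1}^N(1-\beta t_j)$, $\det(I-\alpha g^*)=\prod_{j=1}^N(1-\alpha t_j^{-1})$, apply the Weyl integration formula, use the bialternant formula \eqref{eqn: genschurdef} to replace $s_\lambda(t_1,\dots,t_N)$ times one Vandermonde by $\det(t_j^{\,N-k+\lambda_k})_{1\le j,k\le N}$, absorb the single-variable weight $w(z):=\prod_{\alpha\in A}(1-\alpha z^{-1})\prod_{\beta\in B}(1-\beta z)$ into the rows of that determinant, and invoke Andr\'eief's identity. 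This collapses the multiple integral to
\begin{equation*}
\mathcal{M}_N(A,B;\lambda)=\det\big(\widehat{w}(k-l-\lambda_k)\big)_{1\le k,l\le N},\qquad \widehat{w}(p):=[z^{p}]\,w(z);
\end{equation*}
equivalently, this is the minor of the bi-infinite Toeplitz matrix with symbol $w$ on rows $\{k-\lambda_k:1\le k\le N\}$ (a strictly increasing set, since $\lambda$ is a dominant weight) and columns $\{1,\dots,N\}$.

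Next I would factor $w=w_-w_+$ with $w_-(z)=\sum_{j\ge0}(-1)^j e_j(A)z^{-j}$ and $w_+(z)=\sum_{i\ge0}(-1)^i e_i(B)z^{i}$, so that the Toeplitz matrix of $w$ is the product of a lower-triangular Toeplitz matrix built from the $e_i(B)$ and an upper-triangular Toeplitz matrix built from the $e_j(A)$. Applying the Cauchy--Binet formula to the $\lambda$-shifted minor then writes $\mathcal{M}_N(A,B;\lambda)$ as a sum, over $N$-element index sets $R$, of a product of two minors — one of the triangular matrix attached to $B$, one of that attached to $A$ — and the support conditions on the $e_i$ leave only finitely many $R$. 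By the (skew) dual Jacobi--Trudi identity, each minor is, up to sign, a Schur or skew Schur polynomial of $A$ or of $B$ whose shape is read off from $R$ and from $\lambda$, so that $\mathcal{M}_N(A,B;\lambda)$ becomes a finite sum of products of such Schur polynomials.

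The main work — and the step I expect to be the real obstacle — is the combinatorial re-indexing that turns this sum into the subset sum of \eqref{eqn: twistedrecipe}. The mechanism is a complementation ($180^{\circ}$ rotation) identity for Schur polynomials inside a rectangle, $s_{\rho^{c}}(x_1,\dots,x_k)=(x_1\cdots x_k)^{m}\,s_{\rho}(x_1^{-1},\dots,x_k^{-1})$ for the complement $\rho^{c}$ of $\rho$ in a $k\times m$ box; this is exactly what produces the prefactor $(UV)^N$ and the passage from $U\subseteq A,\ V\subseteq B$ to $1/U,\ 1/V$ inside $Z$. The twist enters through the two halves of $\lambda$: the nonnegative parts (assembled into $\mu$) shift the admissible range of $R$ on the side of $A$ and, after a Pieri/branching resummation, I expect them to contribute a factor $s_{\mu'}$ (evaluated at the appropriately modified and negated set; the conjugate $\mu'$ appearing because the Toeplitz entries are elementary rather than complete symmetric functions), while the negative parts (assembled, after reversal and negation, into $\nu$) deform the rectangle governing the $B$-side skew shape and likewise contribute $s_{\nu'}$. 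Getting every exponent, sign, and rectangle dimension exactly right — so that what survives besides $s_{\mu'}(-\cdot)\,s_{\nu'}(-\cdot)$ is precisely $Z(A\smallsetminus U\cup\tfrac1V,\ B\smallsetminus V\cup\tfrac1U)$ — is the delicate point; I would pin down the normalisations by first treating $\lambda=0$ (which must reproduce \eqref{eqn: recipe}) and then tracking the effect of adjoining or deleting a single box of $\lambda$, arguing by induction on $|\mu|+|\nu|$. A shortcut that offloads this bookkeeping onto \eqref{eqn: recipe} is available: adjoin auxiliary shifts to $A$ and $B$, expand the extra factors $\prod\det(I-y_jg)$ and $\prod\det(I-x_ig^*)$ by the dual Cauchy identity, recover $s_\lambda(g)$ by extracting the $s_{\nu'}(x)$- and $s_{\mu'}(y)$-coefficients (using the Koike decomposition of $s_\mu(g)\overline{s_\nu(g)}$ into generalized Schur functions, and downward induction on $|\mu|+|\nu|$ to isolate $s_\lambda(g)$), and then apply \eqref{eqn: recipe} to $\mathcal{M}_N(A\cup\{x_i\},B\cup\{y_j\};0)$.
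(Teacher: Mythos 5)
Your Andr\'eief/Toeplitz route is a legitimate alternative starting point, but the proposal stops exactly where the theorem begins: you yourself flag the ``combinatorial re-indexing that turns this sum into the subset sum of \eqref{eqn: twistedrecipe}'' as ``the real obstacle,'' and what you write there is a list of expectations (``I expect them to contribute a factor $s_{\mu'}$\dots'', ``likewise contribute $s_{\nu'}$'') rather than an argument. Concretely, after Cauchy--Binet you have a sum over $N$-element index sets $R$ of products of (skew) Schur minors of the two triangular Toeplitz factors; nothing in the proposal explains how $R$ is to be parametrized by pairs $(U,V)$ with $U\subseteq A$, $V\subseteq B$, $|U|=|V|$, why the two skew shapes straighten out so that their conjugates become exactly $\mu'$ and $\nu'$ evaluated at the perturbed alphabets $-(A\smallsetminus U\cup\tfrac1V)$ and $-(B\smallsetminus V\cup\tfrac1U)$, or how the leftover factors reassemble into $Z(A\smallsetminus U\cup\tfrac1V,\,B\smallsetminus V\cup\tfrac1U)$. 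The complementation-in-a-rectangle identity you invoke does produce a $(UV)^N$ and the passage to $1/U$, $1/V$, but pinning down ``every exponent, sign, and rectangle dimension'' is not bookkeeping around the proof --- it is the proof, and an inductive normalisation pass against $\lambda=0$ does not obviously close it.

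The paper takes a different and more algebraic path that discharges this difficulty in one stroke. After rewriting $\det(I+\alpha_\ell^{-1}g^*)=\overline{\det g}\,\det(I+\alpha_\ell g)\,\alpha_\ell^{-N}$ and shifting $\lambda$ to a genuine partition $\rho$ so that the residual power of $\det g$ and $t_1\cdots t_N$ is absorbed into $\overline{s_{\langle(L+M)^N\rangle}(t)}$, it expands the single product $\prod_{k=1}^{L+K}\det(I+\alpha_k g)$ by the dual Cauchy identity, pushes $s_{\mu'}(t)s_{\rho}(t)$ through the Littlewood--Richardson rule, and integrates by orthogonality (Lemma~\ref{lem: schurorthogonal}), leaving only $c^{\langle N^{L+M}\rangle}_{\mu,\rho'}$. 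Lemma~\ref{lem: bumpgamburdlemma} shows this rectangular LR coefficient is a Kronecker delta, which collapses the $\mu$-sum to a single Schur polynomial $s_\tau(\alpha_1,\dots,\alpha_{L+K})$; Lemma~\ref{lem: cofactorexpansion} (a cofactor/Lagrange expansion) then turns $s_\tau$ into the $(U,V)$-sum with the $s_{\mu'}(-\cdot)\,s_{\nu'}(-\cdot)$ factors, and a change of variables gives \eqref{eqn: twistedrecipe}. The ``delicate re-indexing'' you anticipate is thus done once and for all in Lemma~\ref{lem: bumpgamburdlemma}, not by tracking signs and exponents through a Wiener--Hopf factorization. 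Your closing shortcut (auxiliary shifts, isolate $s_\lambda$ via a Koike-type decomposition, apply \eqref{eqn: recipe} to the augmented sets) is closer in spirit to the paper's use of dual Cauchy, but it is also left at the level of a plan --- the downward induction to extract $s_\lambda(g)$ from $s_\mu(g)\overline{s_\nu(g)}$ is not set up --- and it quotes \eqref{eqn: recipe} as an external input, whereas the paper proves Theorem~\ref{thm: twistedrecipe} directly and obtains \eqref{eqn: recipe} as the $\lambda=0$ special case.
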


\noindent \textit{Note}: Together with the definition \eqref{eqn: schurpolynomialnotation}, we adopt the convention that if $\emptyset$ is the empty set, then $s_{\rho}(\emptyset) =0$ for all partitions $\rho$ except the zero partition $\rho=0$, in which case $s_0(\emptyset)=1$. The definition \eqref{eqn: schurpolynomialnotation} of $s_{\rho}(E)$ is independent of the order of the $e_1,\dots,e_k$ because Schur polynomials are symmetric polynomials. Also, by the standard convention that $s_{\rho}(x_1,x_2,\dots,x_k)=0$ if $\rho$ is a partition of length strictly greater than the number $k$ of variables, we see from Theorem~\ref{thm: twistedrecipe} that $\mathcal{M}_N(A,B;\lambda) = 0$ unless $\lambda_1\leq |A|$ and $\lambda_N \geq -|B|$.

We prove Theorem~\ref{thm: twistedrecipe} in Section~\ref{sec: twistedrecipe} by generalizing a proof of \eqref{eqn: recipe} due to Bump and Gamburd~\cite[Section~3.1]{bumpgamburd} that uses properties of Schur polynomials.

Our proof of \eqref{eqn: recipe} via the BK Splitting and Theorem~\ref{thm: twistedrecipe} involves evaluating sums of products of two Schur polynomials. A key identity in our proof is given by the following theorem, which is a generalization of an identity due to Cauchy. To state it, recall the definition \eqref{eqn: Zdef} and the notations \eqref{eqn: UVNdef}, \eqref{eqn: 1/Udef}, and \eqref{eqn: schurpolynomialnotation}.

\begin{theorem}\label{thm: unitaryidentity}
Let $N$ be a positive integer. Suppose that $A,B,C,D$ are finite sets of nonzero complex numbers such that $A$, $B$, $1/A$, and $1/B$ are pairwise disjoint and $C$, $D$, $1/C$, and $1/D$ are pairwise disjoint. Suppose further that $A$ is nonempty and $B$ is nonempty. For a partition $\mu$, let $\mu_1$ denote its largest part. Then
\begin{equation}\label{eqn: unitaryidentity0}
\sum_{\substack{\mu,\nu \\ \mu_1+\nu_1\leq N}} s_{\mu}(A)s_{\mu}(B) s_{\nu}(C)s_{\nu}(D) = \Sigma_0 + \Sigma_1 + \Sigma_2,
\end{equation}
where the sum on the left-hand side is over all pairs $\mu,\nu$ of partitions such that the sum of the largest part $\mu_1$ of $\mu$ and the largest part $\nu_1$ of $\nu$ is $\leq N$, and where $\Sigma_0$ ,$\Sigma_1$, and $\Sigma_2$ are defined by
\begin{align*}
\Sigma_0: = Z(A,B)Z(C,D),
\end{align*}
\begin{align*}
\Sigma_1: =
& \sum_{\substack{W\subseteq C \\ Y \subseteq D\\ |W|=|Y| >0}} (WY)^{N} Z( C\smallsetminus W \cup \tfrac{1}{Y}, D\smallsetminus Y \cup \tfrac{1}{W}) \sum_{a\in A} \sum_{b\in B} \prod_{\substack{\hat{a}\in A \\ \hat{a}\neq a}} \left(1-\frac{\hat{a}}{a}\right)^{-1} \prod_{\substack{\hat{b}\in B \\ \hat{b}\neq b}} \left(1-\frac{\hat{b}}{b}\right)^{-1}
 \\
& \hspace{.8in}  \times   \sum_{\substack{Q\subseteq A\smallsetminus \{a\} \\ R \subseteq B\smallsetminus \{b\} \\ |Q|=|R| }}  \left( 1- \frac{ab(QR)^1}{(WY)^1} \right)^{-1} Z(A\smallsetminus \{a\}\smallsetminus Q \cup \tfrac{1}{R} , B\smallsetminus \{b\}\smallsetminus R \cup \tfrac{1}{Q}),
\end{align*}
and
\begin{align*}
\Sigma_2: = 
& \sum_{\substack{W\subseteq C \\ Y \subseteq D\\ |W|=|Y| }}   Z( C\smallsetminus W \cup \tfrac{1}{Y}, D\smallsetminus Y \cup \tfrac{1}{W}) \sum_{a\in A} \sum_{b\in B} \prod_{\substack{\hat{a}\in A \\ \hat{a}\neq a}} \left(1-\frac{\hat{a}}{a}\right)^{-1} \prod_{\substack{\hat{b}\in B \\ \hat{b}\neq b}} \left(1-\frac{\hat{b}}{b}\right)^{-1}
 \\
&  \hspace{.2in}  \times   \sum_{\substack{Q\subseteq A\smallsetminus \{a\} \\ R \subseteq B\smallsetminus \{b\} \\ |Q|=|R| }}  ( ab)^N(QR )^{N}\left(  1- \frac{(WY)^1} {ab(QR)^1} \right)^{-1} Z(A\smallsetminus \{a\}\smallsetminus Q \cup \tfrac{1}{R} , B\smallsetminus \{b\}\smallsetminus R \cup \tfrac{1}{Q}).
\end{align*}
More generally, \eqref{eqn: unitaryidentity0} holds for any finite multisets $A$ and $B$ of complex numbers, without any restrictions, provided that the right-hand side of \eqref{eqn: unitaryidentity0} is interpreted as its analytic continuation.
\end{theorem}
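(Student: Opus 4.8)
The plan is to bring both sides of \eqref{eqn: unitaryidentity0} into the shape of one and the same sum over quadruples of subsets $U\subseteq A,\ V\subseteq B,\ W\subseteq C,\ Y\subseteq D$, and then to match them. It suffices to treat the case in which $A,B,C,D$ are sets obeying the stated genericity hypotheses, the general multiset statement following by analytic continuation. For finite sets $X,Y$ put $P_{X,Y}(t):=\sum_{\mu}t^{\mu_1}s_\mu(X)s_\mu(Y)$, a well-defined formal power series in $t$ (for each $k$ only finitely many partitions $\mu$ with $\mu_1=k$ contribute, since $s_\mu$ vanishes once $\ell(\mu)$ exceeds the number of variables). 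Grouping the partitions on the left of \eqref{eqn: unitaryidentity0} by the value $m=\mu_1+\nu_1$ and summing the geometric series $\tfrac1{1-t}$ gives $\text{LHS}=\sum_{m=0}^{N}[t^m]\bigl(P_{A,B}(t)P_{C,D}(t)\bigr)=[t^N]\tfrac{P_{A,B}(t)\,P_{C,D}(t)}{1-t}$. Since $\mathcal M_k(X,Y;0)=\sum_{\mu:\mu_1\le k}s_\mu(X)s_\mu(Y)$ (a consequence of the dual Cauchy identity and orthogonality of Schur functions over $U(k)$), the recipe \eqref{eqn: recipe} (the case $\lambda=0$ of Theorem~\ref{thm: twistedrecipe}) yields, after summation by parts,
\[
P_{X,Y}(t)=(1-t)\!\!\sum_{\substack{U\subseteq X,\ V\subseteq Y\\ |U|=|V|}}\!\!\frac{Z\bigl(X\smallsetminus U\cup\tfrac1V,\ Y\smallsetminus V\cup\tfrac1U\bigr)}{1-t\,(UV)^1}.
\]
Substituting this and extracting $[t^N]$ via the partial fraction $[t^N]\tfrac{1-t}{(1-tw)(1-tz)}=\tfrac{w^N(w-1)-z^N(z-1)}{w-z}$, one obtains
\[
\text{LHS}=\sum_{\substack{U\subseteq A,\ V\subseteq B\\ |U|=|V|}}\ \sum_{\substack{W\subseteq C,\ Y\subseteq D\\ |W|=|Y|}}Z_{UV}\,Z_{WY}\ \frac{w^N(w-1)-z^N(z-1)}{w-z},
\]
where $w=(UV)^1$, $z=(WY)^1$, $Z_{UV}=Z(A\smallsetminus U\cup\tfrac1V,B\smallsetminus V\cup\tfrac1U)$, $Z_{WY}=Z(C\smallsetminus W\cup\tfrac1Y,D\smallsetminus Y\cup\tfrac1W)$.

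The second step is to massage $\Sigma_0+\Sigma_1+\Sigma_2$ into a comparable form. Abbreviate the residue weights by $\rho_{a,b}=\prod_{\hat a\in A,\,\hat a\ne a}(1-\tfrac{\hat a}{a})^{-1}\prod_{\hat b\in B,\,\hat b\ne b}(1-\tfrac{\hat b}{b})^{-1}$ and set $Z^{a,b}_{Q,R}=Z(A\smallsetminus\{a\}\smallsetminus Q\cup\tfrac1R,\ B\smallsetminus\{b\}\smallsetminus R\cup\tfrac1Q)$, $p=(WY)^1$, $q=ab(QR)^1$. For fixed $W,Y$ with $|W|=|Y|>0$ and fixed $a,b,Q,R$, the corresponding summand of $\Sigma_1$ carries the factor $(WY)^N(1-q/p)^{-1}=\tfrac{p^{N+1}}{p-q}$ while that of $\Sigma_2$ carries $(ab)^N(QR)^N(1-p/q)^{-1}=\tfrac{q^{N+1}}{q-p}$; these add to $\tfrac{p^{N+1}-q^{N+1}}{p-q}$. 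Because the outer sum of $\Sigma_1$ omits $W=Y=\emptyset$ whereas that of $\Sigma_2$ retains it, the $W=Y=\emptyset$ contribution of $\Sigma_2$ is handled separately: writing $\tfrac{q^{N+1}}{q-1}=\tfrac{q^{N+1}-1}{q-1}+\tfrac1{q-1}$ splits it into a piece that rejoins the combined sum (with $p=1$) plus a leftover piece $Z(C,D)\Psi_E$, where $\Psi_E:=\sum_{a\in A,\,b\in B}\rho_{a,b}\sum_{\substack{Q\subseteq A\smallsetminus\{a\},\ R\subseteq B\smallsetminus\{b\}\\ |Q|=|R|}}Z^{a,b}_{Q,R}\,\tfrac1{q-1}$. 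The net result is
\[
\Sigma_0+\Sigma_1+\Sigma_2=Z(C,D)\bigl(Z(A,B)+\Psi_E\bigr)+\sum_{\substack{W\subseteq C,\ Y\subseteq D\\ |W|=|Y|}}Z_{WY}\,\Psi_N\bigl((WY)^1\bigr),
\]
with $\Psi_N(z):=\sum_{a,b}\rho_{a,b}\sum_{Q,R}Z^{a,b}_{Q,R}\,\dfrac{z^{N+1}-q^{N+1}}{z-q}$.

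The crux is a \emph{peeling lemma}: for nonempty (generic) sets $A,B$ and any rational function $f$ regular at $w=1$,
\[
\sum_{\substack{U\subseteq A,\ V\subseteq B\\ |U|=|V|}}Z_{UV}\,f\bigl((UV)^1\bigr)=\sum_{a\in A,\,b\in B}\rho_{a,b}\sum_{\substack{Q\subseteq A\smallsetminus\{a\},\ R\subseteq B\smallsetminus\{b\}\\ |Q|=|R|}}Z^{a,b}_{Q,R}\,\widetilde f\bigl(ab(QR)^1\bigr),\qquad \widetilde f(q):=\frac{q\,f(q)-f(1)}{q-1}.
\]
Granting it, apply it with $f(w)=\tfrac{w^N(w-1)-z^N(z-1)}{w-z}$: then $f(1)=z^N$ and the algebra collapses to $\widetilde f(q)=\tfrac{q^{N+1}-z^{N+1}}{q-z}$, which is exactly the kernel of $\Psi_N$; hence the inner $(U,V)$-sum in $\text{LHS}$ equals $\Psi_N((WY)^1)$, so $\text{LHS}=\sum_{W,Y}Z_{WY}\Psi_N((WY)^1)$. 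Applying the lemma instead with $f(w)=(1-tw)^{-1}$ gives the recursion $P_{A,B}(t)=\sum_{a,b}\rho_{a,b}\,\tfrac{P_{A\smallsetminus\{a\},B\smallsetminus\{b\}}(tab)}{1-tab}$; specializing at $t=1$, where $P_{X,Y}(1)=\sum_\mu s_\mu(X)s_\mu(Y)=Z(X,Y)$ by Cauchy's identity, and using $\sum_{Q,R}Z^{a,b}_{Q,R}(1-q)^{-1}=P_{A\smallsetminus\{a\},B\smallsetminus\{b\}}(ab)/(1-ab)$, yields $Z(A,B)=-\Psi_E$. Thus $Z(C,D)(Z(A,B)+\Psi_E)$ vanishes and $\Sigma_0+\Sigma_1+\Sigma_2=\sum_{W,Y}Z_{WY}\Psi_N((WY)^1)=\text{LHS}$, completing the argument.

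The one genuine obstacle is the peeling lemma; everything else is bookkeeping. I expect to prove it by partial fractions, decomposing $\prod_{a\in A}(1-a\xi)^{-1}=\sum_{a\in A}\bigl(\prod_{\hat a\ne a}(1-\tfrac{\hat a}{a})^{-1}\bigr)(1-a\xi)^{-1}$ as a function of an auxiliary variable $\xi$ (and analogously in the $B$-variables), followed by induction on $|A|$; alternatively, one can realize $\sum_{U,V}Z_{UV}f((UV)^1)$ as an iterated contour integral of CFKRS type and read off the right-hand side by taking residues at $z_1=a$ and $w_1=b$. Either way the delicate point is to track how the products $Z(A\smallsetminus U\cup\tfrac1V,\dots)$ interact with the residue weights $\rho_{a,b}$ and to confirm that the weight function transforms under peeling precisely by $f\mapsto\widetilde f$; this is where essentially all of the work in the proof is concentrated.
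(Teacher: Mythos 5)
The proposal is correct and takes essentially the same route as the paper, merely reorganized in generating-function language. Both arguments reduce the double sum to an application of Lemma~\ref{lem: partialcauchyrecipe} to collapse the $\nu$-sum and Lemma~\ref{lem: partialcauchywithextrafactorrecipe} to collapse the $\mu$-sum, then perform the same geometric-series/partial-fraction split to identify $\Sigma_1$ and $\Sigma_2$, and finally recognize the $W=Y=\emptyset$ residual as $Z(A,B)Z(C,D)$ via Cauchy's identity. The one point where your write-up is incomplete is the ``peeling lemma'', which you flag as the genuine obstacle and leave unproved; but this is not actually a new difficulty. For $f(w)=w^m$ one has $\widetilde f(q)=\frac{qf(q)-f(1)}{q-1}=1+q+\cdots+q^m$, so the asserted identity
\begin{equation*}
\sum_{\substack{U\subseteq A,\ V\subseteq B\\ |U|=|V|}}Z_{UV}\,(UV)^m=\sum_{a\in A}\sum_{b\in B}\rho_{a,b}\sum_{\substack{Q\subseteq A\smallsetminus\{a\},\ R\subseteq B\smallsetminus\{b\}\\ |Q|=|R|}}Z^{a,b}_{Q,R}\sum_{k=0}^m\bigl(ab(QR)^1\bigr)^k
\end{equation*}
is exactly Lemma~\ref{lem: partialcauchyrecipe} applied to the left-hand side combined with Lemma~\ref{lem: partialcauchywithextrafactorrecipe} at $x=1$, $n=m$ applied to the right-hand side; by linearity this gives the peeling lemma for any polynomial $f$, which is all that is needed since your kernel $\frac{w^N(w-1)-z^N(z-1)}{w-z}$ is a polynomial in $w$ of degree $N$ (and the $f=(1-tw)^{-1}$ application can be handled by expanding in $t$). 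So rather than reproving the peeling lemma by partial fractions or a CFKRS contour-integral argument, you can simply cite the paper's two lemmas; with that one citation supplied, your argument is complete and equivalent to the proof in the paper.
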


Theorem~\ref{thm: unitaryidentity} is directly analogous to \eqref{eqn: unitaryidentityzeta}, which is the identity that plays a key role in the heuristic in \cite{ck5}, because both Theorem~\ref{thm: unitaryidentity} and \eqref{eqn: unitaryidentityzeta} are the main tools used to carry out the summation over the ``twist parameters'' $\lambda$ or $M_1,\dots,M_{\ell},N_1,\dots,N_{\ell}$ in their respective settings.

We now outline the rest of the paper. In Section~\ref{sec: schur}, we state a few identities involving sums of products of Schur polynomials that we use in our proofs and might also be of independent interest. We prove these identities in Section~\ref{sec: schurproofs}. We prove the BK Splitting (Theorem~\ref{thm: BK}) in Section~\ref{sec: BK}. We prove Theorem~\ref{thm: twistedrecipe} in Section~\ref{sec: twistedrecipe} and Theorem~\ref{thm: unitaryidentity} in Section~\ref{sec: unitaryidentity}. In Section~\ref{sec: recipe}, we apply the BK Splitting and Theorem~\ref{thm: twistedrecipe} to prove \eqref{eqn: recipe} using lower twisted moments. In Appendix~\ref{sec: appendix}, we include for completeness proofs of certain known results that we use as lemmas in our proofs.

\section{Identities for sums of products of two Schur polynomials}\label{sec: schur}

In this section, we state lemmas involving Schur polynomials that may be of independent interest. We will prove these lemmas in Section~\ref{sec: schurproofs}.

\begin{lemma}\label{lem: partialcauchysumequalsmoment}
Let $A$ and $B$ be finite multisets of complex numbers. If $N$ is a positive integer, then
$$
\sum_{\substack{\nu \\ \nu_1 \leq N}} s_{\nu}(A)s_{\nu}(B) = \mathcal{M}_N(A,B;0),
$$
where the sum is over all partitions $\nu$ whose largest part $\nu_1$ is $\leq N$, $s_{\nu}(A)$ and $s_{\nu}(B)$ are defined by \eqref{eqn: schurpolynomialnotation}, and $\mathcal{M}_N(A,B;0)$ is defined by \eqref{eqn: twistedmomentdefinition} with $\lambda=0$, the zero partition.
\end{lemma}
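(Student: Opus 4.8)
The plan is to prove Lemma~\ref{lem: partialcauchysumequalsmoment} by combining the classical dual Cauchy identity (in the truncated form adapted to $N$ variables) with the Weyl integration formula on $U(N)$. First I would recall that for the eigenvalues $t_1,\dots,t_N$ of $g\in U(N)$, which all lie on the unit circle, we have $\overline{t_j}=t_j^{-1}$, so that $\det(I-\alpha g^*)=\prod_j(1-\alpha \overline{t_j})=\prod_j(1-\alpha t_j^{-1})$ and $\det(I-\beta g)=\prod_j(1-\beta t_j)$. Next I would expand each of the finite products $\prod_{\alpha\in A}\prod_j(1-\alpha t_j^{-1})$ and $\prod_{\beta\in B}\prod_j(1-\beta t_j)$ using the dual Cauchy identity: for any two alphabets, $\prod_{i}\prod_{k}(1+x_iy_k)=\sum_{\rho}s_{\rho}(x)s_{\rho'}(y)$, where $\rho$ ranges over partitions fitting in the appropriate rectangle and $\rho'$ is the conjugate. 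Actually, the cleaner route is the \emph{Cauchy identity} $\prod_{i}\prod_{k}(1-x_iy_k)^{-1}=\sum_{\rho}s_{\rho}(x)s_{\rho}(y)$ together with its finite-variable truncation, since the statement to be proved has $s_\nu(A)s_\nu(B)$ with matching indices; but because the left side of the Cauchy identity involves $(1-x_iy_k)^{-1}$ rather than $(1-x_iy_k)$, I will instead go through the orthogonality of Schur polynomials under Haar measure.

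Concretely, the core input I would invoke is the well-known fact (provable from the Weyl integration formula and the bialternant definition~\eqref{eqn: genschurdef}) that the Schur polynomials $s_\nu(t_1,\dots,t_N)$, as $\nu$ runs over partitions of length $\le N$, are orthonormal on $U(N)$: $\int_{U(N)} s_\mu(t_1,\dots,t_N)\overline{s_\nu(t_1,\dots,t_N)}\,dg=\delta_{\mu\nu}$, and moreover $\overline{s_\nu(t_1,\dots,t_N)}=s_\nu(t_1^{-1},\dots,t_N^{-1})$ on the unit torus. Then I would write $\prod_{\alpha\in A}\det(I-\alpha g^*)=\prod_{\alpha\in A}\prod_{j=1}^N(1-\alpha t_j^{-1})$ and, by repeated use of the generating function $\prod_{j=1}^N(1-\alpha t_j^{-1})=\sum_{k=0}^{N}(-\alpha)^k e_k(t_1^{-1},\dots,t_N^{-1})$ and the Cauchy-type expansion, obtain the expansion $\prod_{\alpha\in A}\prod_j(1-\alpha t_j^{-1})=\sum_{\nu:\,\nu_1\le N} c_\nu(A)\, s_{\nu}(t_1^{-1},\dots,t_N^{-1})$ for suitable coefficients $c_\nu(A)$, and symmetrically $\prod_{\beta\in B}\prod_j(1-\beta t_j)=\sum_{\nu:\,\nu_1\le N} c_\nu(B)\, s_\nu(t_1,\dots,t_N)$. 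The precise claim I need is that these coefficients are exactly $c_\nu(A)=s_\nu(A)$ up to the indexing, which is a standard dual-Cauchy computation: $\prod_{\alpha\in A}\prod_{j}(1-\alpha t_j^{-1}) = \sum_{\nu} s_{\nu'}(A)\,(-1)^{|\nu|}s_\nu(t^{-1})$ where $\nu$ fits in an $N\times|A|$ box, and a parallel identity for the $B$-product; then the indices must be conjugated consistently so that the diagonal term of the orthogonality relation picks out $\sum_\nu s_{\nu'}(A)s_{\nu'}(B)$, which is just a reindexing of $\sum_{\nu:\nu_1\le N}s_\nu(A)s_\nu(B)$.

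With those two expansions in hand, I would multiply them, integrate over $U(N)$, and apply orthonormality: $\mathcal{M}_N(A,B;0)=\int_{U(N)}\bigl(\sum_\mu c_\mu(A)s_\mu(t^{-1})\bigr)\bigl(\sum_\nu c_\nu(B)s_\nu(t)\bigr)\,dg=\sum_{\mu,\nu}c_\mu(A)c_\nu(B)\int_{U(N)}\overline{s_\mu(t)}\,s_\nu(t)\,dg=\sum_{\nu:\nu_1\le N}c_\nu(A)c_\nu(B)$, and then identify $c_\nu(A)c_\nu(B)$ with $s_\nu(A)s_\nu(B)$ after the reindexing described above, giving the claimed formula. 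The passage from sets to multisets is immediate since both sides are polynomials in the elements of $A$ and $B$ (the left side is manifestly a finite sum because $s_\nu(A)$ vanishes once $\ell(\nu)>|A|$, so the constraints $\nu_1\le N$ and $\ell(\nu)\le|A|$ leave only finitely many terms). The main obstacle is bookkeeping the conjugation of partitions correctly: the dual Cauchy identity naturally produces $s_{\nu'}$ on one side, so I must track the two conjugations (one from each of the $A$- and $B$-products) and confirm they combine to leave the summand as $s_\nu(A)s_\nu(B)$ rather than, say, $s_{\nu'}(A)s_\nu(B)$; a clean way to sidestep this is to instead directly use the finite Cauchy identity $\sum_{\nu:\,\ell(\nu)\le N}s_\nu(x_1,\dots,x_N)s_\nu(y)=\prod_{i,k}(1-x_iy_k)^{-1}$ in combination with the observation that on $U(N)$ one has $\prod_{\alpha\in A}\det(I-\alpha g^*)=\det(I-g^*\,\mathrm{diag}(A))$-type factorizations, but I expect the orthogonality-plus-dual-Cauchy route above to be the most transparent.
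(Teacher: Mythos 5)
Your proposal is correct and follows essentially the same route as the paper: factor the characteristic polynomials over the eigenvalues, expand both products via the dual Cauchy identity, and apply Schur orthogonality over $U(N)$, with the constraint $\nu_1\le N$ arising because $s_{\nu'}(t_1,\dots,t_N)$ vanishes when $\ell(\nu')=\nu_1>N$. Your sign and conjugation bookkeeping ($(-1)^{|\nu|}$ from each factor cancelling, and $s_{\nu'}(A)s_{\nu'}(B)$ reindexing to $s_\nu(A)s_\nu(B)$) matches the paper's use of homogeneity to pass from $s_\nu(-A)s_\nu(-B)$ to $s_\nu(A)s_\nu(B)$.
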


\begin{lemma}\label{lem: partialcauchyrecipe}
Let $A$ and $B$ be finite sets of nonzero complex numbers such that $A$, $B$, $1/A$, and $1/B$ are pairwise disjoint. If $m$ is a nonnegative integer, then
\begin{equation}\label{eqn: partialcauchyrecipe0}
\sum_{\substack{\nu \\ \nu_1 \leq m}} s_{\nu}(A)s_{\nu}(B) = \sum_{\substack{U\subseteq A \\  V\subseteq B \\ |U|=|V|}} (UV)^m Z(A\smallsetminus U \cup \tfrac{1}{V}, B\smallsetminus V \cup \tfrac{1}{U}),
\end{equation}
where the sum over $\nu$ is over all partitions whose largest part $\nu_1$ is $\leq m$, $s_{\nu}(A)$ and $s_{\nu}(B)$ are defined by \eqref{eqn: schurpolynomialnotation}, and we are using the notations defined in \eqref{eqn: Zdef}, \eqref{eqn: UVNdef}, and \eqref{eqn: 1/Udef}. More generally, \eqref{eqn: partialcauchyrecipe0} holds for any finite multisets $A$ and $B$ of complex numbers, without any restrictions, provided that the right-hand side of \eqref{eqn: partialcauchyrecipe0} is interpreted as its analytic continuation.
\end{lemma}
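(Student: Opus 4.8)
The plan is to prove \eqref{eqn: partialcauchyrecipe0} directly from the determinantal definition \eqref{eqn: genschurdef}, following the strategy Bump and Gamburd use for \eqref{eqn: recipe}. I would first reduce to the case $|A|=|B|$. If, say, $|A|<|B|$, adjoin $|B|-|A|$ new indeterminates to $A$ to obtain a set $A'$ with $|A'|=|B|$, apply the equal-cardinality case to $(A',B)$, and then let the new indeterminates tend to $0$ along a generic ray (so that limits of ratios are unambiguous). On the left, $s_\nu(x_1,\dots,x_r,0,\dots,0)$ equals $s_\nu(x_1,\dots,x_r)$ when $\ell(\nu)\le r$ and vanishes otherwise, so the limit of the left side of \eqref{eqn: partialcauchyrecipe0} for $(A',B)$ is its left side for $(A,B)$; on the right, every summand in which $U$ meets the set of new indeterminates has a factor tending to $0$ (a power of such an indeterminate when $m\ge 1$, and in any case a vanishing factor of $Z$ coming from the reciprocal of such an indeterminate), while the surviving summands reassemble the right side for $(A,B)$. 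The case $m=0$ is checked by hand, both sides being $1$.

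For the equal-cardinality case put $|A|=|B|=k$, $A=\{a_1,\dots,a_k\}$, $B=\{b_1,\dots,b_k\}$. Writing $s_\nu(A)$ and $s_\nu(B)$ via \eqref{eqn: genschurdef} with $\nu$ padded to length $k$, the partitions $\nu$ with $\nu_1\le m$ correspond bijectively to the $k$-element subsets of $\{0,1,\dots,m+k-1\}$ through $\nu\mapsto\{\nu_j+k-j:1\le j\le k\}$, so the Cauchy--Binet formula gives
\[
\sum_{\substack{\nu \\ \nu_1\le m}}s_\nu(A)s_\nu(B)=\frac{1}{\Delta(A)\Delta(B)}\det\!\left(\frac{1-(a_ib_j)^{m+k}}{1-a_ib_j}\right)_{1\le i,j\le k},
\]
where $\Delta(A)=\prod_{i<j}(a_i-a_j)$ and $\Delta(B)=\prod_{i<j}(b_i-b_j)$.

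It remains to evaluate this $k\times k$ determinant. I would write the $(i,j)$ entry as $\tfrac{1}{1-a_ib_j}-\tfrac{(a_ib_j)^{m+k}}{1-a_ib_j}$ and expand the determinant multilinearly in its columns; the columns at which the second term is chosen form a subset $V$ of $B$. After factoring $b_j^{m+k}$ out of each column $j$ with $b_j\in V$ and writing every matrix entry over the denominator $b_j-a_i^{-1}$ (using $\tfrac{1}{1-a_ib_j}=-a_i^{-1}/(b_j-a_i^{-1})$ and $\tfrac{a_i^{m+k}}{1-a_ib_j}=-a_i^{m+k-1}/(b_j-a_i^{-1})$) and then factoring powers of the $a_i$ out of the rows, a Laplace expansion along the $V$-columns produces a row subset of the same size, which I identify with a subset $U$ of $A$. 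Each of the two resulting minors is a Cauchy determinant, which I evaluate by $\det\big(1/(1-c_id_j)\big)_{1\le i,j\le n}=\prod_{i<j}(c_i-c_j)(d_i-d_j)\,Z(C,D)$; collecting the monomial prefactors together with all the Vandermonde factors — from $1/(\Delta(A)\Delta(B))$, from the Cauchy-determinant denominators, and from resolving $\prod_{a\in A\smallsetminus U,\,u\in U}(1-a/u)$, $\prod_{b\in B\smallsetminus V,\,v\in V}(1-b/v)$ and $\prod_{u\in U,\,v\in V}(1-1/(uv))$ as Vandermonde ratios — the contribution of the pair $(U,V)$ collapses to $(UV)^m\,Z(A\smallsetminus U\cup\tfrac{1}{V},\,B\smallsetminus V\cup\tfrac{1}{U})$. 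In particular, the excess factor $(UV)^k$, coming from the exponent $m+k$ rather than $m$ in the determinant, is exactly what those Vandermonde ratios absorb. This establishes \eqref{eqn: partialcauchyrecipe0} when $A,B$ are sets satisfying the stated hypotheses, and the general multiset statement follows by analytic continuation, since both sides are rational functions of the entries of $A$ and $B$ that agree on a nonempty Zariski-open set.

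The step I expect to be the main obstacle is this determinant evaluation: tracking precisely the signs (the Laplace sign and those produced in the row and column factorings and in Vandermonde reorderings), the monomial factors $\prod_i a_i^{e_i}\prod_j b_j^{f_j}$, and the many Vandermonde factors, so that the two block Cauchy determinants together with the correction Vandermondes reconstitute the single $k\times k$ Cauchy determinant underlying $Z(A\smallsetminus U\cup\tfrac{1}{V},B\smallsetminus V\cup\tfrac{1}{U})$ and so that the $(m+k)$-th powers in the entries reduce cleanly to the factors $(UV)^m$ of \eqref{eqn: partialcauchyrecipe0}. One could instead shortcut the argument by combining Lemma~\ref{lem: partialcauchysumequalsmoment} with \eqref{eqn: recipe} (equation~(2.16) of \cite{cfkrs2}), but I prefer the self-contained route so that the re-derivation of \eqref{eqn: recipe} in Section~\ref{sec: recipe} does not implicitly rely on it.
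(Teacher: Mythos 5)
Your proposal is sound, but it takes a genuinely different route from the paper. For $m\geq 1$ the paper's proof is a two-step deduction: by Lemma~\ref{lem: partialcauchysumequalsmoment} the left-hand side of \eqref{eqn: partialcauchyrecipe0} equals $\mathcal{M}_m(A,B;0)$, and then \eqref{eqn: recipe} (equation (2.16) of \cite{cfkrs2}, or Theorem~\ref{thm: twistedrecipe} at $\lambda=0$) supplies the right-hand side; the case $m=0$, where Lemma~\ref{lem: partialcauchysumequalsmoment} is unavailable, is handled separately by showing via the substitution \eqref{eqn: Uchangevar}--\eqref{eqn: Vchangevar} and Lemma~\ref{lem: cofactorexpansion} that the right-hand side collapses to $s_0(\alpha_1,\dots,\alpha_{L+K})=1$. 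You instead give a self-contained proof: Cauchy--Binet turns the truncated Cauchy sum into $\det\big((1-(a_ib_j)^{m+k})/(1-a_ib_j)\big)$ divided by the two Vandermonde determinants, and the column expansion followed by a Laplace expansion into pairs of Cauchy determinants produces the subset sum. This evaluation does work out: for each pair $(U,V)$ the two Cauchy determinants, the extracted monomials $(UV)^{m+k}$, and the ratio of Vandermondes reassemble into $(UV)^m Z(A\smallsetminus U\cup\tfrac{1}{V},B\smallsetminus V\cup\tfrac{1}{U})$ once $Z(\tfrac{1}{V},\tfrac{1}{U})$, $Z(A\smallsetminus U,\tfrac{1}{U})$, and $Z(\tfrac{1}{V},B\smallsetminus V)$ are rewritten as Vandermonde ratios times powers of the elements of $U$ and $V$, so the excess exponent $k$ is absorbed exactly as you claim and only the sign bookkeeping remains; your reduction to $|A|=|B|$ is also fine. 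What your route buys is independence from \cite{cfkrs2} and uniformity in $m$; what it costs is that you are in effect re-proving \eqref{eqn: recipe} by the classical determinantal method, which is far longer than the paper's deduction and partly duplicates the purpose of Section~\ref{sec: recipe} --- note that the paper explicitly acknowledges there that its re-derivation of \eqref{eqn: recipe} implicitly uses the formula for lower moments, so the circularity you are guarding against is accounted for rather than hidden. One caution: do not dispose of $m=0$ with ``both sides being $1$'' --- that the right-hand side equals $1$ at $m=0$ is precisely the nontrivial computation the paper carries out with Lemma~\ref{lem: cofactorexpansion}. In your framework the case $m=0$ is already covered, since the Cauchy--Binet identity and the determinant evaluation apply verbatim with exponent $k$, so you should route $m=0$ through the main argument rather than assert it.
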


Note that we allow $A$ or $B$ to be empty in Lemma~\ref{lem: partialcauchyrecipe}, in which case the equation still holds because both sides equal $1$ (recall that $s_{\rho}(\emptyset)=0$ if $\rho$ is a nonzero partition and $s_0(\emptyset)=1$; see the note below \eqref{eqn: schurpolynomialnotation}).

\begin{lemma}\label{lem: partialcauchywithextrafactorrecipe}
Let $A$ and $B$ be nonempty finite sets of nonzero complex numbers such that $A$, $B$, $1/A$, and $1/B$ are pairwise disjoint. Let $x$ be a complex number. If $n$ is a nonnegative integer, then
\begin{align}
\sum_{\substack{\nu \\ \nu_1 \leq n}} x^{\nu_1} s_{\nu}(A) s_{\nu}(B) =
& \sum_{a\in A} \sum_{b\in B} \prod_{\substack{\hat{a}\in A \\ \hat{a}\neq a}} \left(1-\frac{\hat{a}}{a}\right)^{-1} \prod_{\substack{\hat{b}\in B \\ \hat{b}\neq b}} \left(1-\frac{\hat{b}}{b}\right)^{-1} \notag
 \\
&  \times \sum_{0\leq m\leq n} (xab)^{m} \sum_{\substack{U\subseteq A\smallsetminus \{a\} \\ V\subseteq B\smallsetminus \{b\} \\ |U|=|V|}} (UV)^m Z(A\smallsetminus \{a\}\smallsetminus U \cup \tfrac{1}{V}, B\smallsetminus \{b\}\smallsetminus V \cup \tfrac{1}{U}), \label{eqn: partialcauchywithextrafactorrecipe}
\end{align}
where the sum over $\nu$ is over all partitions whose largest part $\nu_1$ is $\leq n$, $s_{\nu}(A)$ and $s_{\nu}(B)$ are defined by \eqref{eqn: schurpolynomialnotation}, and we are using the notations defined in \eqref{eqn: Zdef}, \eqref{eqn: UVNdef}, and \eqref{eqn: 1/Udef}. More generally, \eqref{eqn: partialcauchywithextrafactorrecipe} holds for any nonempty finite multisets $A$ and $B$ of complex numbers provided that the right-hand side of \eqref{eqn: partialcauchywithextrafactorrecipe} is interpreted as its analytic continuation.
\end{lemma}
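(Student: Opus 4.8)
\medskip
\noindent\emph{Proof proposal.}
The plan is to reduce \eqref{eqn: partialcauchywithextrafactorrecipe} to an identity for the coefficient of $x^{m}$ and then prove that identity via Jacobi's bialternant formula \eqref{eqn: genschurdef} together with Lemma~\ref{lem: partialcauchyrecipe}. Grouping the partitions $\nu$ on the left of \eqref{eqn: partialcauchywithextrafactorrecipe} by the value $m$ of the largest part $\nu_1$, and invoking Lemma~\ref{lem: partialcauchyrecipe} for the sets $A\smallsetminus\{a\}$ and $B\smallsetminus\{b\}$ to rewrite the innermost $Z$-sum on the right, one sees that \eqref{eqn: partialcauchywithextrafactorrecipe} is equivalent to the claim that, for every integer $m\geq 0$,
\begin{equation}\label{eqn: proposalstar}
\sum_{\substack{\nu\\ \nu_1=m}} s_{\nu}(A)s_{\nu}(B)=\sum_{a\in A}\sum_{b\in B}\ \prod_{\substack{\hat a\in A\\ \hat a\neq a}}\Bigl(1-\tfrac{\hat a}{a}\Bigr)^{-1}\prod_{\substack{\hat b\in B\\ \hat b\neq b}}\Bigl(1-\tfrac{\hat b}{b}\Bigr)^{-1}(ab)^{m}\sum_{\substack{\mu\\ \mu_1\leq m}} s_{\mu}(A\smallsetminus\{a\})s_{\mu}(B\smallsetminus\{b\}).
\end{equation}
Since \eqref{eqn: proposalstar} with $m$ fixed is a polynomial identity in the elements of $A$ and $B$, the extension to arbitrary multisets stated in the lemma then follows by analytic continuation.

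Next I would reduce to the case $|A|=|B|$. Both sides of \eqref{eqn: proposalstar} remain regular when any element of $A$ or of $B$ is specialized to $0$ — the left side because Schur polynomials are polynomials, and the right side because any term in which the vanishing element is chosen as $a$ or $b$ then carries a coefficient tending to $0$ — and such a specialization of an element of $A$ turns \eqref{eqn: proposalstar} for sets of sizes $(|A|,|B|)$ into the same identity for sizes $(|A|-1,|B|)$. Iterating, together with the symmetry of \eqref{eqn: proposalstar} under $A\leftrightarrow B$, reduces the problem to $|A|=|B|=N$.

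Assume $|A|=|B|=N$; write $A=\{a_1,\dots,a_N\}$, $B=\{b_1,\dots,b_N\}$, $\Delta(A)=\prod_{i<j}(a_i-a_j)$, and similarly $\Delta(B)$. Applying \eqref{eqn: genschurdef} to $s_{\nu}(A)$ and $s_{\nu}(B)$ and reindexing partitions $\nu$ with $\ell(\nu)\leq N$ by the strictly decreasing tuples $\mu=\nu+(N-1,N-2,\dots,0)$ (the remaining $\nu$ contributing $0$), the left side of \eqref{eqn: partialcauchywithextrafactorrecipe} becomes
\begin{equation*}
\frac{1}{\Delta(A)\Delta(B)}\sum_{\substack{\mu_1>\mu_2>\dots>\mu_N\geq 0\\ \mu_1\leq n+N-1}} x^{\mu_1-N+1}\,\det\bigl(a_j^{\mu_k}\bigr)_{1\leq j,k\leq N}\det\bigl(b_j^{\mu_k}\bigr)_{1\leq j,k\leq N}.
\end{equation*}
Because the weight $x^{\mu_1-N+1}$ depends only on $\mu_1$, the decisive step is to expand each determinant along its first column: with $\mu_1=p$ fixed, the summand becomes a double sum over $i,i'\in\{1,\dots,N\}$ of $(-1)^{i+i'}a_i^{p}b_{i'}^{p}$ times a product of two $(N-1)\times(N-1)$ minors, and the Cauchy--Binet identity gives
\begin{equation*}
\sum_{0\leq\mu_N<\dots<\mu_2<p}\det\bigl(a_j^{\mu_k}\bigr)_{\substack{j\neq i\\ 2\leq k\leq N}}\det\bigl(b_j^{\mu_k}\bigr)_{\substack{j\neq i'\\ 2\leq k\leq N}}=\det\Bigl(\tfrac{1-(a_jb_{j'})^{p}}{1-a_jb_{j'}}\Bigr)_{\substack{j\neq i\\ j'\neq i'}}.
\end{equation*}
The same Cauchy--Binet computation applied to $A\smallsetminus\{a_i\}$ and $B\smallsetminus\{b_{i'}\}$ identifies $\Delta(A\smallsetminus\{a_i\})^{-1}\Delta(B\smallsetminus\{b_{i'}\})^{-1}$ times this last determinant with $\sum_{\mu_1\leq p-N+1}s_{\mu}(A\smallsetminus\{a_i\})s_{\mu}(B\smallsetminus\{b_{i'}\})$. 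Substituting this, using the elementary Vandermonde-ratio identity
\begin{equation*}
(-1)^{i+i'}\frac{\Delta(A\smallsetminus\{a_i\})\,\Delta(B\smallsetminus\{b_{i'}\})}{\Delta(A)\,\Delta(B)}=(a_ib_{i'})^{-(N-1)}\prod_{\hat a\neq a_i}\Bigl(1-\tfrac{\hat a}{a_i}\Bigr)^{-1}\prod_{\hat b\neq b_{i'}}\Bigl(1-\tfrac{\hat b}{b_{i'}}\Bigr)^{-1},
\end{equation*}
and setting $m=p-N+1$, one arrives exactly at the right side of \eqref{eqn: proposalstar} with the inner $\mu$-sum still in Schur form; a final appeal to Lemma~\ref{lem: partialcauchyrecipe} for $A\smallsetminus\{a\}$ and $B\smallsetminus\{b\}$ turns it into the $Z$-sum and completes the argument.

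The main obstacle I anticipate is the bookkeeping in this last step — tracking the signs from the two column expansions, the Vandermonde ratios, and the shift between $\mu_1$ and $\nu_1$, and in particular checking that the cutoff $\mu_1\leq n+N-1$ matches the cutoff $m\leq n$ and that no stray powers of $a_i$ or $b_{i'}$ remain. A lesser point needing care is the reduction to $|A|=|B|$, where one must confirm that the relevant rational functions are genuinely regular at the loci where a variable vanishes, so the specialization is legitimate.
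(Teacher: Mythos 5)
Your proposal is correct: the reduction to the fixed-$\nu_1$ identity \eqref{eqn: proposalstar} is legitimate, the Laplace expansion along the $\mu_1$-column, the two Cauchy--Binet evaluations, the Vandermonde-ratio identity, and the matching of cutoffs ($p=m+N-1$, $0\le m\le n$) all check out, and the final appeal to Lemma~\ref{lem: partialcauchyrecipe} closes the argument. The core idea is the same as the paper's --- peel off the largest part $\nu_1$ by a cofactor expansion of the bialternant determinant, producing the sum over $a\in A$, $b\in B$ with the factors $\prod(1-\hat a/a)^{-1}\prod(1-\hat b/b)^{-1}(ab)^{\nu_1}$ and Schur polynomials of $A\smallsetminus\{a\}$, $B\smallsetminus\{b\}$, then evaluate the remaining sum by Lemma~\ref{lem: partialcauchyrecipe}. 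The difference is purely in execution: the paper simply applies the already-available Lemma~\ref{lem: cofactorexpansion} with $\tau=\nu$, $\gamma=(\nu_1)$, $\delta=(\nu_2,\nu_3,\dots)$ to each of $s_\nu(A)$ and $s_\nu(B)$ \emph{separately}, which gives your key factorization in one line, works for arbitrary $|A|\neq|B|$, and therefore dispenses with both your reduction to $|A|=|B|$ (with its specialization-to-zero bookkeeping) and the two Cauchy--Binet passes through determinants of partial geometric sums. Your route is a from-scratch re-derivation of that special case of Lemma~\ref{lem: cofactorexpansion}; it buys nothing extra here, so if you notice that the $L=1$ case of that lemma is exactly the column expansion you perform, the proof collapses to the paper's half-page argument.
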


\section{Proof of the BK Splitting}\label{sec: BK}

\begin{lemma}\label{lem: fourierexpansion}
Suppose that $f\in \mathbb{C}[x_1,\dots,x_n,x_1^{-1},\dots,x_n^{-1}]$ is symmetric, i.e., $f(x_1,\dots,x_n) = f(x_{\sigma(1)},\dots,x_{\sigma(n)})$ for every permutation $\sigma$ of $\{1,2,\dots,n\}$. Then there exist a finite set $E$ of dominant weights of length $n$ and a set $\{c_{\lambda} : \lambda \in E\}$ of complex numbers such that
$$
f(x_1,\dots,x_n) = \sum_{\lambda\in E} c_{\lambda} s_{\lambda}(x_1,\dots,x_n).
$$
In other words, the set of generalized Schur functions $s_{\lambda}(x_1,\dots,x_n)$, where $\lambda$ runs through the dominant weights of length $n$, is a spanning set of $\mathbb{C}[x_1,\dots,x_n,x_1^{-1},\dots,x_n^{-1}]$.
\end{lemma}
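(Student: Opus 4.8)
The plan is to reduce the statement to the classical fact that Schur polynomials span the ring of symmetric polynomials in $n$ variables, and then transport this from ordinary polynomials to Laurent polynomials by multiplying through by a suitable power of $x_1 x_2 \cdots x_n$.

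First I would note that, since $f$ is a Laurent polynomial, there is a nonnegative integer $M$ such that $g := (x_1 x_2 \cdots x_n)^M f(x_1,\dots,x_n)$ lies in $\mathbb{C}[x_1,\dots,x_n]$, and, because $f$ is symmetric, so is $g$. Thus $g$ is an honest symmetric polynomial in $n$ variables. By the standard fact that the Schur polynomials $s_{\mu}(x_1,\dots,x_n)$, where $\mu$ runs over partitions with at most $n$ parts, form a $\mathbb{Z}$-basis of $\mathbb{Z}[x_1,\dots,x_n]^{S_n}$ (see, e.g., Chapter~I, \S3 of \cite{macdonald}), and hence a $\mathbb{C}$-basis of $\mathbb{C}[x_1,\dots,x_n]^{S_n}$, we may write $g=\sum_{\mu}c_{\mu}\,s_{\mu}(x_1,\dots,x_n)$ as a finite $\mathbb{C}$-linear combination, where each $\mu$ occurring is regarded as a dominant weight $\mu=(\mu_1,\dots,\mu_n)$ of length $n$ with $\mu_n\geq 0$.

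Next I would record the elementary identity
$$
(x_1 \cdots x_n)^{-M}\, s_{\mu}(x_1,\dots,x_n) = s_{\mu-(M,\dots,M)}(x_1,\dots,x_n),
$$
valid for every dominant weight $\mu$ of length $n$, which follows immediately from the determinant definition \eqref{eqn: genschurdef}: multiplying the $j$-th row of the numerator determinant $\det(x_j^{n-k+\mu_k})_{1\le j,k\le n}$ by $x_j^{-M}$ replaces $\mu_k$ by $\mu_k-M$ in every entry while pulling out the overall scalar $(x_1 \cdots x_n)^{-M}$, and the denominator Vandermonde determinant $\det(x_j^{n-k})_{1\le j,k\le n}$ is left unchanged. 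Since $\mu-(M,\dots,M)$ is again a dominant weight of length $n$, combining this identity with the expansion of $g$ gives
$$
f = (x_1\cdots x_n)^{-M} g = \sum_{\mu} c_{\mu}\, s_{\mu-(M,\dots,M)}(x_1,\dots,x_n),
$$
a finite linear combination of generalized Schur functions indexed by dominant weights of length $n$; taking $E$ to be the (finite) set of dominant weights $\mu-(M,\dots,M)$ arising here completes the argument.

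The only nontrivial ingredient is the classical statement that the Schur polynomials indexed by partitions with at most $n$ parts span the symmetric polynomials in $n$ variables; everything else is bookkeeping with the determinant formula. I would simply cite \cite{macdonald} for it rather than reprove it, though if a self-contained argument were wanted it can be obtained by the usual induction on a monomial order, antisymmetrizing the leading monomial of a symmetric polynomial to produce a Schur polynomial with the same leading term and subtracting.
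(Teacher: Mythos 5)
Your proposal is correct and follows essentially the same route as the paper's proof: clear denominators by multiplying by $(x_1\cdots x_n)^M$, expand the resulting symmetric polynomial in Schur polynomials via the classical basis fact from \cite{macdonald}, and then divide back using the determinant identity $(x_1\cdots x_n)^{-M}s_{\mu}=s_{\mu-(M,\dots,M)}$. No gaps.
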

\begin{proof}
We first clear denominators and let $M$ be an integer such that
\begin{equation*}
(x_1\cdots x_n)^M f(x_1,\dots,x_n) 
\end{equation*}
is a polynomial in $\mathbb{C}[x_1,\dots,x_n]$. Then this product is a symmetric polynomial since $f$ is symmetric. Thus, by the well-known fact that the Schur polynomials form a linear basis for the set of symmetric polynomials (see, for example, (3.2) in Chapter~I of \cite{macdonald}; the proof there also holds true with $\mathbb{C}$ in place of $\mathbb{Z}$), we deduce that there is a finite set $D$ of partitions and a set $\{c_{\lambda} : \lambda \in D\}$ of complex numbers such that
\begin{equation*}
(x_1\cdots x_n)^M f(x_1,\dots,x_n) = \sum_{\lambda\in D} c_{\lambda} s_{\lambda}(x_1,\dots,x_n).
\end{equation*}
Divide both sides by $(x_1\cdots x_n)^M$ to deduce that
\begin{equation}\label{eqn: fourierexpansion1}
f(x_1,\dots,x_n) = \sum_{\lambda\in D} c_{\lambda} (x_1\cdots x_n)^{-M}s_{\lambda}(x_1,\dots,x_n).
\end{equation}
Now Jacobi's bialternant formula for Schur polynomials and the definition \eqref{eqn: genschurdef} of generalized Schur functions implies that if $\lambda=(\lambda_1,\dots,\lambda_n)$, then
\begin{align*}
(x_1\cdots x_n)^{-M} s_{\lambda}(x_1,\dots,x_n)
& = \frac{(x_1\cdots x_n)^{-M}\det\Big(x_j^{n-k+\lambda_k} \Big)_{1\leq j,k\leq n} }{ \det\Big(x_j^{n-k} \Big)_{1\leq j,k\leq n} } \\
& = \frac{\det\Big(x_j^{n-k+\lambda_k-M} \Big)_{1\leq j,k\leq n} }{ \det\Big(x_j^{n-k} \Big)_{1\leq j,k\leq n} } \\
& = s_{\lambda-\langle M^n \rangle }(x_1,\dots,x_n) ,
\end{align*}
where $\lambda-\langle M^n \rangle $ denotes the dominant weight $(\lambda_1-M,\dots,\lambda_n-M)$. The lemma follows from this and \eqref{eqn: fourierexpansion1}.
\end{proof}

The following lemma is equation (17) of Bump and Gamburd~\cite{bumpgamburd}. It is a generalization of the well-known fact that Schur polynomials are orthogonal over $U(N)$ (see, for example, Theorem~2.5 of \cite{bump} and Proposition~2 of \cite{bumpgamburd}). For completeness, we include a proof of this lemma in Appendix~\ref{sec: appendix}.

\begin{lemma}\label{lem: schurorthogonal}
For dominant weights $\lambda$ and $\rho$ of length $N$, let
\begin{equation*}
\mathcal{J}_N(\lambda,\rho) := \int_{U(N)} s_{\lambda}(t_1,\dots,t_N) \overline{s_{\rho}(t_1,\dots,t_N)}\,dg,
\end{equation*}
where the integral is over the unitary group $U(N)$ equipped with the Haar measure $dg$ such that $\int_{U(N)}1\,dg=1$ and the numbers $t_1,\dots,t_N$ are the eigenvalues of $g$, each repeated according to its multiplicity. Then
\begin{equation*}
\mathcal{J}_N(\lambda,\rho) = \begin{cases} 1 & \text{if } \lambda=\rho \\ 0 & \text{else.} \end{cases}
\end{equation*}
\end{lemma}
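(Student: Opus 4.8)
The plan is to prove Lemma~\ref{lem: schurorthogonal} using the Weyl integration formula for $U(N)$, which reduces an integral of a class function over $U(N)$ to an integral over the maximal torus. First I would recall that for a class function $\phi$ depending on the eigenvalues $t_1=e^{i\theta_1},\dots,t_N=e^{i\theta_N}$ of $g$, one has
\begin{equation*}
\int_{U(N)} \phi(g)\,dg = \frac{1}{N!\,(2\pi)^N} \int_{[0,2\pi]^N} \phi(e^{i\theta_1},\dots,e^{i\theta_N}) \, \bigl|\Delta(e^{i\theta_1},\dots,e^{i\theta_N})\bigr|^2 \, d\theta_1\cdots d\theta_N,
\end{equation*}
where $\Delta(x_1,\dots,x_N)=\det(x_j^{N-k})_{1\le j,k\le N}=\prod_{j<k}(x_j-x_k)$ is the Vandermonde determinant. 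Since $s_\lambda(t_1,\dots,t_N)\overline{s_\rho(t_1,\dots,t_N)}$ is manifestly a class function, this formula applies directly to $\mathcal{J}_N(\lambda,\rho)$.

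The key computation is then to substitute the bialternant formula \eqref{eqn: genschurdef}. Writing $x_j=e^{i\theta_j}$, the Vandermonde factors cancel: $s_\lambda(x) = \det(x_j^{N-k+\lambda_k})/\Delta(x)$, so $s_\lambda(x)\overline{s_\rho(x)}\,|\Delta(x)|^2 = \det(x_j^{N-k+\lambda_k})\,\overline{\det(x_j^{N-k+\rho_k})}$. Expanding both determinants as signed sums over permutations $\sigma,\tau\in S_N$ turns $\mathcal{J}_N(\lambda,\rho)$ into
\begin{equation*}
\frac{1}{N!}\sum_{\sigma,\tau\in S_N}\mathrm{sgn}(\sigma)\,\mathrm{sgn}(\tau)\prod_{j=1}^{N}\frac{1}{2\pi}\int_0^{2\pi} e^{i\theta_j\left((N-\sigma(j)+\lambda_{\sigma(j)})-(N-\tau(j)+\rho_{\tau(j)})\right)}\,d\theta_j,
\end{equation*}
and each single-variable integral is $1$ if the exponent vanishes and $0$ otherwise. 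The exponent for the $j$th factor vanishes iff $\lambda_{\sigma(j)}-\sigma(j) = \rho_{\tau(j)}-\tau(j)$. Because $\lambda$ and $\rho$ are dominant weights, the sequences $(\lambda_k-k)_{k=1}^N$ and $(\rho_k-k)_{k=1}^N$ are each \emph{strictly} decreasing, hence consist of distinct integers; so the only way all $N$ conditions can hold simultaneously for a given pair $(\sigma,\tau)$ is that the multisets $\{\lambda_k-k\}$ and $\{\rho_k-k\}$ coincide, which (by strict monotonicity) forces $\lambda=\rho$, and then $\sigma=\tau$. Summing, $\mathcal{J}_N(\lambda,\rho)=\frac{1}{N!}\sum_{\sigma}1 = 1$ when $\lambda=\rho$ and $0$ otherwise.

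I expect the only real point requiring care to be the bookkeeping in the previous paragraph: namely, justifying rigorously that the strict monotonicity of $(\lambda_k-k)$ and $(\rho_k-k)$ implies that a term survives only when $\lambda=\rho$ and $\sigma=\tau$. Concretely, if a pair $(\sigma,\tau)$ contributes, then $j\mapsto \lambda_{\sigma(j)}-\sigma(j)$ and $j\mapsto \rho_{\tau(j)}-\tau(j)$ agree as functions of $j$; applying $\tau^{-1}$ and setting $\pi=\tau^{-1}\sigma$ shows $\lambda_{\pi(k)}-\pi(k)=\rho_k-k$ for all $k$, so the strictly decreasing rearrangement of the left side (which is $(\lambda_k-k)$ itself) equals $(\rho_k-k)$, giving $\lambda=\rho$; and then $\lambda_{\pi(k)}-\pi(k)=\lambda_k-k$ together with injectivity of $k\mapsto\lambda_k-k$ forces $\pi=\mathrm{id}$, i.e. $\sigma=\tau$. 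The Weyl integration formula itself is standard and can simply be cited (e.g. Theorem~2.1 of \cite{bump}); alternatively, since the lemma is attributed to Bump and Gamburd and the appendix promises a self-contained proof, one may instead derive it from the dual Cauchy identity or from the known orthogonality of irreducible characters of $U(N)$ together with the Weyl character formula, which identifies $s_\lambda(t_1,\dots,t_N)$ with the character of the irreducible representation of highest weight $\lambda$ when $\lambda$ is a dominant weight (not just a partition). I would most likely present the direct Weyl-integration computation above, as it is shortest and fully elementary.
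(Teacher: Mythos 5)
Your proof is correct and follows essentially the same strategy as the paper's: apply the Weyl integration formula, use the bialternant definition \eqref{eqn: genschurdef} to cancel the Vandermonde factors, and reduce to orthogonality of monomials on the torus. The only divergence is in the final evaluation: the paper invokes the Andr\'{e}ief identity to collapse the torus integral into the single determinant $\det\big(\delta(k-j+\lambda_j-\rho_k)\big)$ and then peels off rows and columns by cofactor expansion, whereas you expand both alternants as double sums over permutations and argue combinatorially, using the strict decrease of $k\mapsto\lambda_k-k$ and $k\mapsto\rho_k-k$ to show only the diagonal terms $\sigma=\tau$ (and only when $\lambda=\rho$) survive. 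Both routes are elementary and of comparable length; your permutation bookkeeping is sound (the injectivity argument forcing $\pi=\mathrm{id}$ is exactly the point that needs care, and you handle it correctly, modulo the harmless transposition of $\sigma\tau^{-1}$ versus $\tau^{-1}\sigma$), so this is an acceptable substitute for the paper's determinant manipulation.
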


\begin{proof}
See Appendix~\ref{sec: appendix}.
\end{proof}

We now prove the BK Splitting (Theorem~\ref{thm: BK}). By factoring the characteristic polynomials completely, we see for $g\in U(N)$ that
\begin{equation*}
\prod_{\alpha \in A_2} \det(I-\alpha g^*) \prod_{\beta \in B_2} \det(I-\beta g) = \prod_{\alpha\in A_2}\prod_{j=1}^N (1-\alpha t_j^{-1}) \prod_{\beta\in B_2}\prod_{j=1}^N (1-\beta t_j),
\end{equation*}
where $t_1,t_2,\dots,t_N$ are the eigenvalues of $g$, each repeated according to its multiplicity. From this and Lemma~\ref{lem: fourierexpansion}, we deduce that
\begin{equation}\label{eqn: applyfourier}
\prod_{\alpha \in A_2} \det(I-\alpha g^*) \prod_{\beta \in B_2} \det(I-\beta g) = \sum_{\lambda} c_{\lambda} s_{\lambda} (t_1, \dots, t_N)
\end{equation}
for some complex numbers $c_{\lambda}$, where the sum is over all dominant weights of length $N$ and $c_{\lambda}\neq 0$ for at most finitely many $\lambda$. To determine the value of each $c_{\lambda}$, let $\rho$ be a dominant weight of length $N$, multiply both sides of \eqref{eqn: applyfourier} by $s_{-\rho}(t_1, \dots, t_N)$ (recall that if $\rho=(\rho_1,\rho_2,\dots,\rho_N)$, then $-\rho$ is defined to be the dominant weight $(-\rho_N,-\rho_{N-1},\dots,-\rho_1)$) and then integrate both sides of the resulting equation over $U(N)$ to deduce that
\begin{equation}\label{eqn: applyorthogonality}
\mathcal{M}_N(A_2,B_2;-\rho) = \sum_{\lambda} c_{\lambda}\int_{U(N)}  s_{\lambda} (t_1, \dots, t_N) s_{-\rho} (t_1, \dots, t_N) \,dg .
\end{equation}
Now the definition \eqref{eqn: genschurdef} and properties of determinants imply that
\begin{equation*}
s_{-\rho} (t_1, \dots, t_N) = s_{\rho}\big(t_1^{-1},\dots,t_N^{-1}\big) = \overline{ s_{\rho} (t_1, \dots, t_N)}. 
\end{equation*}
This, \eqref{eqn: applyorthogonality}, and Lemma~\ref{lem: schurorthogonal} imply that
\begin{equation*}
c_{\rho}= \mathcal{M}_N(A_2,B_2;-\rho)
\end{equation*}
for all dominant weights $\rho$ of length $N$. It follows from this and \eqref{eqn: applyfourier} that
\begin{equation*}
\prod_{\alpha \in A_2} \det(I-\alpha g^*) \prod_{\beta \in B_2} \det(I-\beta g) = \sum_{\lambda} \mathcal{M}_N(A_2,B_2;-\lambda) s_{\lambda} (t_1, \dots, t_N).
\end{equation*}
We insert this into the definition \eqref{eqn: twistedmomentdefinition} of $\mathcal{M}_N(A ,B ;0)$ to deduce that
\begin{align*}
\mathcal{M}_N(A,B;0) 
& = \int_{U(N)} \prod_{\alpha \in A_1} \det(I-\alpha g^*) \prod_{\beta \in B_1} \det(I-\beta g) \sum_{\lambda} \mathcal{M}_N(A_2,B_2;-\lambda) s_{\lambda} (t_1, \dots, t_N) \,dg \\
& = \sum_{\lambda} \mathcal{M}_N(A_2,B_2;-\lambda) \int_{U(N)} \prod_{\alpha \in A_1} \det(I-\alpha g^*) \prod_{\beta \in B_1} \det(I-\beta g) s_{\lambda} (t_1, \dots, t_N) \,dg \\
& = \sum_{\lambda} \mathcal{M}_N(A_2,B_2;-\lambda) \mathcal{M}_N(A_1,B_1;\lambda).
\end{align*}
This completes the proof of Theorem~\ref{thm: BK}.

\section{Proof of Theorem~\ref{thm: twistedrecipe}}\label{sec: twistedrecipe}

The following lemma is Lemma~1 of Bump and Gamburd~\cite{bumpgamburd} and follows immediately from the cofactor expansion of the determinant.

\begin{lemma}\label{lem: cofactorexpansion}
Suppose that $\tau$, $\gamma$, and $\delta$ are partitions such that $\tau = (\tau_1,\tau_2,\dots,\tau_{L+K})$, $\gamma = (\tau_1,\tau_2,\dots,\tau_{L})$, and $\delta = (\tau_{L+1},\tau_{L+2},\dots,\tau_{L+K})$. Let $\Xi_{L,K}$ denote the set of permutations $\sigma$ of $\{1,2,\dots,L+K\}$ such that $\sigma(1)<\sigma(2)<\cdots<\sigma(L)$ and $\sigma(L+1)<\sigma(L+2)<\cdots<\sigma(L+K)$, and let $\gamma + \langle K^L\rangle$ denote the partition $(\tau_1+K,\tau_2+K,\dots,\tau_{L}+K)$. Then
\begin{align*}
s_{\tau}(x_1,\dots,x_{L+K}) =
& \sum_{\sigma \in \Xi_{L,K}} \prod_{\substack{1\leq\ell\leq L \\ 1\leq k\leq K}} \big(x_{\sigma(\ell)} -x_{\sigma(L+k)}\big)^{-1} \\
& \times s_{\gamma +\langle K^L\rangle } \big(x_{\sigma(1)},x_{\sigma(2)},\dots, x_{\sigma (L)}\big) s_{\delta} \big(x_{\sigma(L+1)},x_{\sigma(L+2)},\dots, x_{\sigma (L+k)}\big).
\end{align*}
\end{lemma}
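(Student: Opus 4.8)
The plan is to obtain the identity directly from Jacobi's bialternant formula \eqref{eqn: genschurdef} by a Laplace expansion of the numerator determinant along its first $L$ columns. Write $n=L+K$, so that $s_{\tau}(x_1,\dots,x_n)$ equals $\det\big(x_j^{\,n-k+\tau_k}\big)_{1\le j,k\le n}$ divided by the Vandermonde $V(x_1,\dots,x_n):=\det\big(x_j^{\,n-k}\big)_{1\le j,k\le n}=\prod_{1\le i<j\le n}(x_i-x_j)$. The key observation is how the columns of the numerator split: in column $k$ the exponent is $n-k+\tau_k$, and for $1\le k\le L$ this equals $L-k+(\tau_k+K)$, i.e.\ the exponent appearing in the bialternant of the partition $\gamma+\langle K^L\rangle$ in $L$ variables, while for $k=L+k'$ with $1\le k'\le K$ it equals $K-k'+\delta_{k'}$, the exponent in the bialternant of $\delta$ in $K$ variables. (Note $\gamma+\langle K^L\rangle$ is a genuine partition since $\tau_L\ge\tau_{L+K}\ge 0$.)

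Next I would apply the generalized Laplace expansion along columns $1,\dots,L$: the numerator determinant equals $\sum_{S}(-1)^{\varepsilon(S)}\,D_1(S)\,D_2(S^c)$, where $S$ ranges over the $L$-element subsets of $\{1,\dots,n\}$, $\varepsilon(S)=\sum_{i\in S}i+\tfrac{L(L+1)}{2}$, and $D_1(S)$, $D_2(S^c)$ are the minors on rows $S$, columns $1,\dots,L$ and on rows $S^c$, columns $L+1,\dots,L+K$, respectively. Writing $S=\{\sigma(1)<\dots<\sigma(L)\}$ and $S^c=\{\sigma(L+1)<\dots<\sigma(L+K)\}$ sets up a bijection between such subsets and the shuffles $\sigma\in\Xi_{L,K}$, and for this $\sigma$ one has $(-1)^{\varepsilon(S)}=\operatorname{sgn}(\sigma)$ (the standard sign of a shuffle). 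By the exponent computation above and \eqref{eqn: genschurdef} again, $D_1(S)=V(x_{\sigma(1)},\dots,x_{\sigma(L)})\,s_{\gamma+\langle K^L\rangle}(x_{\sigma(1)},\dots,x_{\sigma(L)})$ and $D_2(S^c)=V(x_{\sigma(L+1)},\dots,x_{\sigma(L+K)})\,s_{\delta}(x_{\sigma(L+1)},\dots,x_{\sigma(L+K)})$.

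Finally I would divide through by $V(x_1,\dots,x_n)$. Using antisymmetry of the Vandermonde, $V(x_1,\dots,x_n)=\operatorname{sgn}(\sigma)\,V(x_{\sigma(1)},\dots,x_{\sigma(n)})$, and $V(x_{\sigma(1)},\dots,x_{\sigma(n)})$ factors as the two ``within-block'' Vandermondes $V(x_{\sigma(1)},\dots,x_{\sigma(L)})$ and $V(x_{\sigma(L+1)},\dots,x_{\sigma(L+K)})$ times the ``cross'' product $\prod_{1\le\ell\le L,\,1\le k\le K}(x_{\sigma(\ell)}-x_{\sigma(L+k)})$. Substituting, the within-block Vandermondes from $D_1$ and $D_2$ cancel against those in the denominator, the factor $\operatorname{sgn}(\sigma)$ cancels $(-1)^{\varepsilon(S)}$, and the only surviving factor is the reciprocal of the cross product, yielding exactly the asserted formula. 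The proof has no genuine difficulty beyond bookkeeping: the one thing to be careful about is that the Laplace sign $(-1)^{\varepsilon(S)}$, the sign $\operatorname{sgn}(\sigma)$ produced by reordering the Vandermonde, and the orientations of the Vandermonde factors all match up; this is easy to confirm on the case $L=K=1$, where the identity reads $s_{(\tau_1,\tau_2)}(x_1,x_2)=\big(x_1^{\tau_1+1}x_2^{\tau_2}-x_2^{\tau_1+1}x_1^{\tau_2}\big)/(x_1-x_2)$.
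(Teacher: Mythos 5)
Your proof is correct, and it is exactly the argument the paper has in mind: the paper simply cites this as Lemma~1 of Bump--Gamburd and notes that it ``follows immediately from the cofactor expansion of the determinant,'' which is precisely the generalized Laplace expansion of the bialternant numerator along the first $L$ columns that you carry out, with the sign and Vandermonde bookkeeping done correctly.
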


The following lemma is due to Bump and Gamburd~\cite[p.~243]{bumpgamburd}. We present a different proof using the Littlewood-Richardson rule in Appendix~\ref{sec: appendix}.

\begin{lemma}\label{lem: bumpgamburdlemma}
Let $\langle N^L \rangle$ denote the partition $(N,N,\dots,N)$ with $L$ parts. If $\mu=(\mu_1,\mu_2,\dots)$ and $\nu =(\nu_1,\nu_2,\dots)$ are partitions, then the Littlewood-Richardson coefficient $c^{\langle N^L \rangle}_{\mu,\nu}$ equals $0$ unless each of $\mu$ and $\nu$ has length at most $L$ and
$$
\mu_j + \nu_{L+1-j}=N
$$
for all $j=1,2,\dots,L$, in which case $c^{\langle N^L \rangle}_{\mu,\nu}$ equals $1$.
\end{lemma}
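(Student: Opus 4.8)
The plan is to prove Lemma~\ref{lem: bumpgamburdlemma} by analyzing the Littlewood-Richardson rule directly, using the very rigid shape of the target partition $\langle N^L\rangle$, which is the $L\times N$ rectangle. Recall that $c^{\langle N^L\rangle}_{\mu,\nu}$ counts the number of Littlewood-Richardson skew tableaux of shape $\langle N^L\rangle/\mu$ and content $\nu$ (equivalently, the number of ways to fill the complement of $\mu$ inside the rectangle with a semistandard tableau whose reverse reading word is a lattice word). First I would dispose of the trivial vanishing conditions: if $\mu\not\subseteq\langle N^L\rangle$ the skew shape is empty/undefined, so we need $\mu$ to have at most $L$ parts each $\le N$; and since $|\mu|+|\nu|=LN$ and each $\nu_i$ occupies cells of the skew shape, $\nu$ must also fit in the rectangle, so $\nu$ has at most $L$ parts each $\le N$. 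This already forces all the combinatorics to happen inside the $L\times N$ box.

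The heart of the argument is the following observation about the complement of a partition inside a rectangle. Write $\mu^c$ for the skew shape $\langle N^L\rangle/\mu$. Reading the rows of the rectangle from the bottom ($j=L$) to the top ($j=1$), row $j$ of $\mu^c$ consists of $N-\mu_j$ cells, located in columns $\mu_j+1,\dots,N$. I would show that in \emph{any} Littlewood-Richardson filling of $\mu^c$ by content $\nu$, the entries are forced: by the column-strict and lattice-word conditions, row $L$ (the bottom row of the rectangle) must be filled entirely with $1$'s, so it contributes $N-\mu_L$ copies of $1$; then the lattice condition propagates upward, and an induction on the rows shows that row $L+1-j$ must be filled entirely with the single value $j$, contributing $N-\mu_{L+1-j}$ copies of $j$. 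Semistandardness (weak increase along rows, strict increase down columns) is automatic once each row is constant and the values increase as we move up, precisely because the skew shape of a rectangle-complement has the property that row $i$ lies weakly to the right of row $i+1$. This rigidity gives at most one LR tableau, so $c^{\langle N^L\rangle}_{\mu,\nu}\in\{0,1\}$, and it equals $1$ exactly when the forced content matches $\nu$, i.e.\ when $\nu$ has exactly this multiset of entries: $N-\mu_{L+1-j}$ copies of $j$ for each $j$, which says $\nu_j = N-\mu_{L+1-j}$, equivalently $\mu_j+\nu_{L+1-j}=N$ for all $j=1,\dots,L$.

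I expect the main obstacle to be writing the forcing/induction argument cleanly — one must verify carefully that the lattice-word (Yamanouchi) condition, combined with column strictness, really does pin down each row to a constant value, handling the bookkeeping of which columns are present in each row of the skew shape. An alternative, possibly slicker, route that I would keep in reserve is to use the identity $s_\mu \cdot s_{\mu^\vee} = \sum_\nu (\dots)$ together with the fact that $c^{\langle N^L\rangle}_{\mu,\nu}$ is the coefficient of $s_{\langle N^L\rangle}$ in $s_\mu s_\nu$, and to invoke the known result that the complement map $\mu\mapsto\mu^\vee$ (where $\mu^\vee_j = N-\mu_{L+1-j}$) inside the $L\times N$ rectangle satisfies $c^{\langle N^L\rangle}_{\mu,\nu}=\delta_{\nu,\mu^\vee}$; but since the excerpt says we present a proof via the Littlewood-Richardson rule, I would carry out the direct combinatorial argument above and simply cite the LR rule itself (e.g.\ from Macdonald, Chapter~I, or Fulton's \emph{Young Tableaux}) as the one external input.
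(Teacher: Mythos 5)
Your overall strategy matches the paper's: invoke the Littlewood--Richardson rule, observe that $\mu$ and $\nu$ must fit in the $L\times N$ rectangle, and then show that there is exactly one LR tableau of shape $\langle N^L\rangle/\mu$, whose content gives the stated condition. However, your description of that unique tableau is wrong, and the flaw is not merely cosmetic.

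You claim that in any LR filling of $\langle N^L\rangle/\mu$ the bottom row (row $L$) must be all $1$'s, and more generally that row $L+1-j$ is constant with value $j$, so that values increase as we move up. This contradicts the very column-strictness you invoke (entries must \emph{increase} going down a column, so smaller values belong near the top), and it contradicts the lattice condition (the first letter of the reverse reading word, which is the rightmost entry of the \emph{top} nonempty row, must be a $1$). A concrete counterexample: take $L=2$, $N=2$, $\mu=(1,0)$. The skew shape $\langle 2^2\rangle/(1,0)$ has one cell in row $1$ (column $2$) and two cells in row $2$ (columns $1,2$). The unique LR tableau places a $1$ in row $1$, and $1,2$ in row $2$; its content is $(2,1)$. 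Your claimed filling (row $2$ all $1$'s, row $1$ all $2$'s) puts a $2$ above a $1$ in column $2$, so it is not even semistandard, and its reading word $2,1,1$ fails the lattice condition at the first letter. In general the rows of the unique LR tableau of $\langle N^L\rangle/\mu$ are \emph{not} constant unless $\mu$ is itself a rectangle.

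The correct forcing argument (which is the one the paper gives) proceeds from the \emph{top}: the topmost nonempty row must be all $1$'s; then each subsequent row, read right to left, must first repeat $k+1$ below wherever the row above had a $k$, and then use the smallest allowable values ($1$'s) in the cells that stick out to the left. This produces a ``staircase'' pattern in which value $k$ occupies a hook-shaped region, and counting occurrences gives the content $(N-\mu_L,\,N-\mu_{L-1},\dots,N-\mu_1)$, i.e.\ $\nu_j = N-\mu_{L+1-j}$. So your final formula is right, but the combinatorial forcing you propose to justify it does not hold; as written, the induction you describe would derive an invalid tableau and the argument would collapse at the column-strictness check. Your ``reserve'' alternative via the rectangular-complement identity $c^{\langle N^L\rangle}_{\mu,\nu}=\delta_{\nu,\mu^\vee}$ is fine as a citation but is exactly the statement being proved, so it would not constitute an independent proof.
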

\begin{proof}
See Appendix~\ref{sec: appendix}.
\end{proof}

We now prove Theorem~\ref{thm: twistedrecipe}. Denote
\begin{equation}\label{eqn: Aalpha}
A=\{-\alpha_1^{-1},\dots,-\alpha_L^{-1}\}
\end{equation}
and
\begin{equation}\label{eqn: Balpha}
B=\{-\alpha_{L+1},\dots,-\alpha_{L+K}\}.
\end{equation}
With these notations, the definition \eqref{eqn: twistedmomentdefinition} may be written as
\begin{equation}\label{eqn: twistedmoment1}
\mathcal{M}_N(A,B;\lambda) = \int_{U(N)} \prod_{\ell=1}^L \det(I+\alpha_{\ell}^{-1}g^{*}) \prod_{k=1}^K \det(I +\alpha_{L+k} g) s_{\lambda} (t_1, \dots, t_N) \,dg.
\end{equation}
Further denote $\lambda=(\lambda_1,\dots,\lambda_N)$, and let
\begin{equation}\label{eqn: Mdef}
M = \left\{\begin{array}{cl}  -\lambda_N & \text{if } \lambda_N < 0 \\ 0 & \text{if } \lambda_N\geq 0. \end{array}\right.
\end{equation}
Then the sequence
\begin{equation}\label{eqn: makeapartition}
(\lambda_1+M,\dots,\lambda_N+M) = \rho, \ \text{say},
\end{equation}
is a partition because it is a decreasing sequence and $\lambda_N+M\geq 0$. The properties of determinants together with \eqref{eqn: genschurdef} and \eqref{eqn: makeapartition} imply that for $g$ as in \eqref{eqn: twistedmoment1}, we have
\begin{equation*}
\det(I+\alpha_{\ell}^{-1}g^{*}) = \overline{\det g} \det(I+\alpha_{\ell} g) \alpha_{\ell}^{-N}
\end{equation*}
for each $\ell$,
\begin{equation*}
\Big(\overline{\det g}\Big)^L (t_1\cdots t_N)^{-M} = \Big(\overline{t_1\cdots t_N}\Big)^{L+M} = \overline{ s_{ \langle (L+M)^N\rangle } (t_1,\dots,t_N) } ,
\end{equation*}
where $\langle (L+M)^N\rangle$ denotes the partition $(L+M,L+M,\dots,L+M)$ with $N$ parts, and
\begin{equation*}
(t_1\cdots t_N)^M s_{\lambda}(t_1,\dots,t_N) = s_{\rho}(t_1,\dots,t_N).
\end{equation*}
From these and \eqref{eqn: twistedmoment1}, we arrive at
\begin{equation}\label{eqn: twistedmoment2}
\mathcal{M}_N(A,B;\lambda)  = \prod_{\ell=1}^L \alpha_{\ell}^{-N}\int_{U(N)}
 \prod_{k=1}^{K+L} \det(I+\alpha_k g) \overline{ s_{ \langle (L+M)^N\rangle } (t_1,\dots,t_N) } s_{\rho}(t_1,\dots,t_N )\,dg.
\end{equation}
Now the dual Cauchy identity states that (see, for example, equation (4.3$'$) in Chapter~I of \cite{macdonald})
\begin{equation}\label{eqn: dualcauchyidentity}
\sum_{\mu} s_{\mu} (x_1,x_2,\dots,x_m) s_{\mu'}(y_1,y_2,\dots,y_n) = \prod_{k=1}^m \prod_{j=1}^n (1+x_k y_j),
\end{equation}
where the sum on the left-hand side is over all partitions $\mu$ (note that this sum has only finitely many nonzero terms by the fact that the length of $\mu'$ equals the largest part of $\mu$ and the convention that $s_{\nu}(x_1,\dots,x_m)=0$ if $\nu$ is a partition of length larger than $m$). We completely factor the characteristic polynomials in \eqref{eqn: twistedmoment2} and then apply the dual Cauchy identity \eqref{eqn: dualcauchyidentity} to deduce that
\begin{equation}\label{eqn: applydualcauchy}
\prod_{k=1}^{K+L} \det(I+\alpha_k g) = \prod_{k=1}^{K+L} \prod_{j=1}^N (1+\alpha_k t_j) = \sum_{\mu} s_{\mu} (\alpha_1,\dots,\alpha_{K+L}) s_{\mu'}(t_1,\dots,t_N).
\end{equation}
From this and \eqref{eqn: twistedmoment2}, we arrive at
\begin{align*}
\mathcal{M}_N(A,B;\lambda) = \prod_{\ell=1}^L \alpha_{\ell}^{-N}\int_{U(N)} \sum_{\mu} s_{\mu}(\alpha_1,\dots,\alpha_{K+L}) s_{\mu'}(t_1,\dots,t_N) \\
\times s_{\rho}(t_1,\dots,t_N ) \overline{s_{\langle (L+M)^N \rangle}(t_1,\dots,t_N)} \,dg.
\end{align*}
It follows from this and the definition of the Littlewood-Richardson coefficients (see, for example, equation (5.2) and \S 9 of Chapter~I of \cite{macdonald}) that
\begin{align*}
\mathcal{M}_N(A,B;\lambda)  =
& \prod_{\ell=1}^L \alpha_{\ell}^{-N}\sum_{\mu} s_{\mu}(\alpha_1,\dots,\alpha_{K+L})  \\
& \times \sum_{\nu} c^{\nu}_{\mu',\rho} \int_{U(N)} s_{\nu}(t_1,\dots,t_N)\overline{s_{\langle (L+M)^N \rangle}(t_1,\dots,t_N)}\,dg,
\end{align*}
where the $\nu$-sum is over all partitions $\nu$. We apply Lemma~\ref{lem: schurorthogonal} (or its well-known version in the special case when the generalized Schur functions are also Schur polynomials) to deduce that
\begin{equation*}
\mathcal{M}_N(A,B;\lambda)  = \prod_{\ell=1}^L \alpha_{\ell}^{-N}\sum_{\mu} s_{\mu}(\alpha_1,\dots,\alpha_{K+L})   c^{\langle (L+M)^N \rangle}_{\mu',\rho}.
\end{equation*}
This is equivalent to
\begin{equation}\label{eqn: twistedmoment3}
\mathcal{M}_N(A,B;\lambda)  = \prod_{\ell=1}^L \alpha_{\ell}^{-N}\sum_{\mu} s_{\mu}(\alpha_1,\dots,\alpha_{K+L})   c^{\langle N^{L+M} \rangle}_{\mu,\rho'}
\end{equation}
because, in general, $c^{\lambda}_{\mu,\nu} = c^{\lambda'}_{\mu',\nu'}$ (see, for example, (2.7) and (3.8) of Chapter I of MacDonald). Now write $\rho'$ as
\begin{equation}\label{eqn: rhoprime}
\rho' = (h_1,h_2,\dots).
\end{equation}
From \eqref{eqn: twistedmoment3} and Lemma~\ref{lem: bumpgamburdlemma}, we deduce that if $\rho'$ has length $>L+M$, then
\begin{equation}\label{eqn: twistedmoment4a}
\mathcal{M}_N(A,B;\lambda)  = 0,
\end{equation}
while if $\rho'$ has length $\leq L+M$, then
\begin{equation}\label{eqn: twistedmoment4b}
\mathcal{M}_N(A,B;\lambda)  = s_{\tau}(\alpha_1,\dots,\alpha_{K+L}) \prod_{\ell=1}^L \alpha_{\ell}^{-N} ,
\end{equation}
where $\tau$ is the partition $\tau = (N-h_{L+M},N-h_{L+M-1},\dots,N-h_1)$. Recalling the definition \eqref{eqn: makeapartition} of $\rho$, we see that the length of its conjugate partition $\rho'$ is exactly $\lambda_1+M$. Thus, \eqref{eqn: twistedmoment4a} holds if $\lambda_1>L$, while \eqref{eqn: twistedmoment4b} holds if $\lambda_1\leq L$. Moreover, if $-\lambda_N>K$, then $\lambda_N+M=0$ by the definition \eqref{eqn: Mdef} of $M$, which implies that \eqref{eqn: makeapartition} has length $<N$ and thus $h_1<N$ in \eqref{eqn: rhoprime}. In this case, the partition $\tau$ in \eqref{eqn: twistedmoment4b} has length exactly $L+M$, so that $s_{\tau}(\alpha_1,\dots,\alpha_{K+L})=0$ because $L+M>L+K$ by the assumption $-\lambda_N>K$ and the definition \eqref{eqn: Mdef} of $M$. In conclusion, \eqref{eqn: twistedmoment4a} holds if $\lambda_1>L$ or $-\lambda_N>K$, and \eqref{eqn: twistedmoment4b} holds if $\lambda_1\leq L$ and $-\lambda_N\leq K$. This proves Theorem~\ref{thm: twistedrecipe} when $\lambda_1>L$ or $-\lambda_N>K$ because, in this case, if $\mu$ and $\nu$ are as in the statement of Theorem~\ref{thm: twistedrecipe}, then either the length $\max\{\lambda_1,0\}$ of $\mu'$ is $>L$ or the length $\max\{-\lambda_N,0\}$ of $\nu'$ is $>K$, which by \eqref{eqn: Aalpha} and \eqref{eqn: Balpha} means that all Schur polynomials in the conclusion of Theorem~\ref{thm: twistedrecipe} are zero by the standard convention that a Schur polynomial $s_{\nu}(x_1,\dots,x_n)$ is zero unless the length of the partition $\nu$ is $\leq n$.

To complete the proof of Theorem~\ref{thm: twistedrecipe}, we now assume that $\lambda_1\leq L$ and $-\lambda_N\leq K$, so that \eqref{eqn: twistedmoment4b} holds. Let $\mu$ and $\nu$ be as defined in the statement of Theorem~\ref{thm: twistedrecipe}. Then $\mu'$ has length $\max\{\lambda_1,0\}$, which is $\leq L$ by our current assumption $\lambda_1\leq L$. Thus, we may denote
\begin{equation}\label{eqn: muprime}
\mu' = (\mu_1',\mu_2',\dots,\mu_L').
\end{equation}
Similarly, $\nu'$ has length $M$ by \eqref{eqn: Mdef} and we may thus denote
\begin{equation}\label{eqn: nuprime}
\nu' = (\nu_1',\nu_2',\dots,\nu_M').
\end{equation}
The definitions of $\mu$ and $\nu$ in the statement of Theorem~\ref{thm: twistedrecipe} and the definitions \eqref{eqn: Mdef}, \eqref{eqn: makeapartition}, \eqref{eqn: rhoprime}, \eqref{eqn: muprime}, and \eqref{eqn: nuprime} imply that 
\begin{equation*}
(h_1,\dots,h_M) = (N-\nu_M', N-\nu_{M-1}',\dots,N-\nu_1')
\end{equation*}
and
\begin{equation*}
(h_{M+1}, \dots, h_{M+L}) = (\mu_1',\dots,\mu_L').
\end{equation*}
It follows that the partition $\tau$ defined in \eqref{eqn: twistedmoment4b} equals
\begin{equation*}
\tau  = (N-\mu_L',N-\mu'_{L-1},\dots,N-\mu'_1, \nu_1',\nu_2',\dots,\nu_M').
\end{equation*}
From this, \eqref{eqn: twistedmoment4b}, and Lemma~\ref{lem: cofactorexpansion} with $\gamma = (N-\mu_L',N-\mu'_{L-1},\dots,N-\mu'_1)$ and $\delta = \nu'$, we deduce that
\begin{align}
\mathcal{M}_N(A,B;\lambda) = & \sum_{\sigma \in \Xi_{L,K}} \prod_{\substack{1\leq \ell\leq L \\ 1\leq k\leq K }} \big(\alpha_{\sigma(\ell)} -\alpha_{\sigma(L+k)}\big)^{-1} \prod_{\ell=1}^L \alpha_{\ell}^{-N} \notag\\
& \times  s_{ \gamma+ \langle K^L\rangle }  \big(\alpha_{\sigma(1)},\dots,\alpha_{\sigma(L)}\big) s_{\nu'} \big(\alpha_{\sigma(L+1)},\dots, \alpha_{\sigma(L+K)}\big). \label{eqn: twistedmoment5}
\end{align}
To simplify the right-hand side, we use the properties of determinants and Jacobi's bialternant formula to deduce that
\begin{align*}
s_{ \gamma+ \langle K^L\rangle } (x_1,\dots,x_L) & =  (x_1\cdots x_L)^{K+N} \frac{\det\left(x_j^{i-1-\mu_{i}'}\right)_{1\leq i,j\leq L} }{\det\left(x_j^{i-1}\right)_{1\leq i,j\leq L}}\\
& =  (x_1\cdots x_L)^{K+N}\frac{\det\left(\left(\frac{1}{x_j}\right)^{L-i+\mu_{i}'}\right)_{1\leq i,j\leq L} }{\det\left(\left(\frac{1}{x_j}\right)^{L-i}\right)_{1\leq i,j\leq L}} \\
& = (x_1\cdots x_L)^{K+N}s_{\mu'} \left( \frac{1}{x_1},\dots,\frac{1}{x_L}\right)
\end{align*}
by \eqref{eqn: muprime}. From this and \eqref{eqn: twistedmoment5}, we arrive at
\begin{align}
\mathcal{M}_N(A,B;\lambda) =
& \sum_{\sigma \in \Xi_{L,K}} \prod_{\substack{1\leq \ell\leq L \\ 1\leq k\leq K }} \big(1-\alpha_{\sigma(\ell)}^{-1} \alpha_{\sigma(L+k)}\big)^{-1} \prod_{k=1}^K\big(\alpha_{\sigma(L+k)}^{-1} \alpha_{L+k}\big)^{N}  \notag\\
& \times  s_{\mu'} \left( \frac{1}{\alpha_{\sigma(1)}},\dots,\frac{1}{\alpha_{\sigma(L)}}\right)s_{\nu'} \big(\alpha_{\sigma(L+1)},\dots, \alpha_{\sigma(L+K)}\big). \label{eqn: twistedmoment6}
\end{align}
Recalling \eqref{eqn: Aalpha}, \eqref{eqn: Balpha}, and the notations described in \eqref{eqn: Zdef}, \eqref{eqn: UVNdef}, and \eqref{eqn: 1/Udef}, we make a change of variables in \eqref{eqn: twistedmoment6} by setting
\begin{equation}\label{eqn: Uchangevar}
U=\big\{-\alpha_{\sigma(L+k)}^{-1}: 1\leq k\leq K \text{ and } \sigma(L+k) \leq L\big\}   
\end{equation}
and
\begin{equation}\label{eqn: Vchangevar}
V = \big\{-\alpha_{L+k}: 1\leq k\leq K \text{ and } L+k \neq \sigma(L+j) \text{ for all } 1\leq j\leq K \big\}
\end{equation}
to write \eqref{eqn: twistedmoment6} as
\begin{align*}
\mathcal{M}_N(A,B;\lambda) = \sum_{\substack{U\subseteq A \\ V\subseteq B \\ |U|=|V|}}
& Z\big(A\smallsetminus U \cup \tfrac{1}{V}, B\smallsetminus V \cup \tfrac{1}{U}\big) (UV)^N   \notag\\
& \times  s_{\mu'}\big(-(A\smallsetminus U \cup \tfrac{1}{V})\big) s_{\nu'} \big(-(B\smallsetminus V \cup \tfrac{1}{U})\big).
\end{align*}
This completes the proof of Theorem~\ref{thm: twistedrecipe}.

\section{Proofs of the identities for sums of products of two Schur polynomials}\label{sec: schurproofs}

\begin{proof}[Proof of Lemma \ref{lem: partialcauchysumequalsmoment}]
Using an argument similar to that in \eqref{eqn: applydualcauchy}, we see from the definition \eqref{eqn: twistedmomentdefinition} and the dual Cauchy identity \eqref{eqn: dualcauchyidentity} that
\begin{equation*}
\mathcal{M}_N(A,B;0) =\int_{U(N)}  \sum_{\mu} s_{\mu}(-A) \overline{s_{\mu'}(T)}  \sum_{\nu} s_{\nu}(-B)s_{\nu'}(T)\,dg,
\end{equation*}
where $T=\{t_1,\dots,t_N\}$ is the multiset of eigenvalues of $g$, with each eigenvalue repeated according to its multiplicity, and the sums over $\mu$ and $\nu$ each have only finitely many nonzero terms. The factor $s_{\nu'}(T)$ is zero for all $g\in U(N)$ unless the largest part $\nu_1$ of $\nu$ is $\leq N$. Similarly, $\overline{s_{\mu'}(T)}=0$ for all $g\in U(N)$ unless the largest part $\mu_1$ of $\mu$ is $\leq N$. From these and Lemma~\ref{lem: schurorthogonal} (or its well-known version in the special case when the generalized Schur functions are also Schur polynomials), we deduce that
\begin{equation*}
\mathcal{M}_N(A,B;0) = \sum_{\substack{\nu \\ \nu_1 \leq N}} s_{\nu}(-A)s_{\nu}(-B).
\end{equation*}
Lemma~\ref{lem: partialcauchysumequalsmoment} follows from this and the fact that the Schur polynomial $s_{\nu}$ is a homogeneous polynomial of degree $|\nu|$.
\end{proof}

\begin{proof}[Proof of Lemma \ref{lem: partialcauchyrecipe}]
Lemma \ref{lem: partialcauchyrecipe} with $m\geq 1$ follows immediately from Lemma~\ref{lem: partialcauchysumequalsmoment} and \eqref{eqn: recipe} (or Theorem~\ref{thm: twistedrecipe} with $\lambda=0$). It is left to prove Lemma~\ref{lem: partialcauchyrecipe} with $m=0$. Observe that
\begin{equation}\label{eqn: partialcauchyrecipe1}
\sum_{\substack{\nu \\ \nu_1 \leq 0}} s_{\nu}(A)s_{\nu}(B) = s_0(A)s_0(B) = 1.
\end{equation}
On the other hand, if we label the elements in $A\cup B$ as in \eqref{eqn: Aalpha} and \eqref{eqn: Balpha}, then we may make a change of variables $U,V\mapsto \sigma$ defined by \eqref{eqn: Uchangevar} and \eqref{eqn: Vchangevar} to write
\begin{equation}\label{eqn: partialcauchyrecipe2}
\sum_{\substack{U\subseteq A \\ V\subseteq B \\ |U|=|V|}}  Z(A\smallsetminus U \cup \tfrac{1}{V}, B\smallsetminus V \cup \tfrac{1}{U}) = \sum_{\sigma \in \Xi_{L,K}}  \prod_{\substack{1\leq \ell\leq L \\ 1\leq k\leq K }} \big(1-\alpha_{\sigma(\ell)}^{-1} \alpha_{\sigma(L+k)}\big)^{-1},
\end{equation}
where $\Xi_{L,K}$ is as defined in Lemma~\ref{lem: cofactorexpansion}. Now if $\langle K^L\rangle$ is the partition $(K,K,\dots,K)$ with $K$ repeated $L$ times, then
\begin{equation*}
s_{\langle K^L \rangle } (x_1,\dots,x_L) = (x_1\cdots x_L)^K.
\end{equation*}
From this, \eqref{eqn: partialcauchyrecipe2}, and Lemma~\ref{lem: cofactorexpansion} with $\tau=\gamma=\delta=0$, we deduce that
\begin{align*}
\sum_{\substack{U\subseteq A \\ V\subseteq B \\ |U|=|V|}}  Z(A\smallsetminus U \cup \tfrac{1}{V}, B\smallsetminus V \cup \tfrac{1}{U})
& = \sum_{\sigma \in \Xi_{L,K}}  s_{\langle K^L \rangle } \big(\alpha_{\sigma(1)},\dots,\alpha_{\sigma(L)}\big) \prod_{\substack{1\leq \ell\leq L \\ 1\leq k\leq K }} \big(\alpha_{\sigma(\ell)}- \alpha_{\sigma(L+k)}\big)^{-1} \\
& = s_0(\alpha_1,\dots,\alpha_{L+K}) = 1.
\end{align*}
Lemma~\ref{lem: partialcauchyrecipe} with $m=0$ follows from this and \eqref{eqn: partialcauchyrecipe1}, and the proof of Lemma~\ref{lem: partialcauchyrecipe} is complete.
\end{proof}

\begin{proof}[Proof of Lemma \ref{lem: partialcauchywithextrafactorrecipe}]

As $\nu$ runs through all partitions $(\nu_1,\nu_2,\dots)$, the number $\nu_1$ runs through all nonnegative integers and the partition $(\nu_2,\nu_3,\dots)$ runs through all partitions whose largest part $\nu_2$ is $\leq \nu_1$. Since $A$ is nonempty, Lemma~\ref{lem: cofactorexpansion} with $\tau = \nu=(\nu_1,\nu_2,\dots)$, $\gamma=(\nu_1)$, and $\delta = (\nu_2,\nu_3,\dots)$ gives
\begin{equation*}
s_{\nu}(A)  = \sum_{a\in A}\prod_{\substack{\hat{a}\in A \\ \hat{a}\neq a}}(a-\hat{a})^{-1} a^{\nu_1+|A|-1} s_{\delta}(A\smallsetminus \{a\}).
\end{equation*}
Similarly,
\begin{equation*}
s_{\nu}(B) = \sum_{b\in B}\prod_{\substack{\hat{b}\in B \\ \hat{b}\neq b}}(b-\hat{b})^{-1} b^{\nu_1+|B|-1} s_{\delta}(B\smallsetminus \{b\}).
\end{equation*}
Therefore
\begin{align*}
& \sum_{\substack{\nu \\ \nu_1 \leq n}} x^{\nu_1} s_{\nu}(A) s_{\nu}(B) \\
& =  \sum_{a\in A}\sum_{b\in B}   \prod_{\substack{\hat{a}\in A \\ \hat{a}\neq a}} \left(1-\frac{\hat{a}}{a}\right)^{-1} \prod_{\substack{\hat{b}\in B \\ \hat{b}\neq b}} \left(1-\frac{\hat{b}}{b}\right)^{-1} \sum_{0\leq \nu_1\leq n} (xab)^{\nu_1} \sum_{\substack{\delta \\ \delta_1\leq \nu_1}}  s_{\delta}(A\smallsetminus \{a\})   s_{\delta}(B\smallsetminus \{b\}),
\end{align*}
where the summation variable $\delta$ runs over all partitions whose largest part $\delta_1$ is $\leq \nu_1$. Evaluating the $\delta$-sum via Lemma~\ref{lem: partialcauchyrecipe} completes the proof of Lemma~\ref{lem: partialcauchywithextrafactorrecipe}.
\end{proof}

\section{Proof of Theorem~\ref{thm: unitaryidentity}}\label{sec: unitaryidentity}

We now prove Theorem~\ref{thm: unitaryidentity}. We first evaluate the $\nu$-sum using Lemma~\ref{lem: partialcauchyrecipe} to deduce that
\begin{align*}
& \sum_{\substack{\mu,\nu \\ \mu_1+\nu_1\leq N}} s_{\mu}(A)s_{\mu}(B) s_{\nu}(C)s_{\nu}(D) \\
& =  \sum_{\substack{W\subseteq C\\  Y\subseteq D \\ |W|=|Y|}} (WY)^{N} Z( C\smallsetminus W \cup \tfrac{1}{Y}, D\smallsetminus Y \cup \tfrac{1}{W})\sum_{\substack{ \mu \\ \mu_1\leq N}} (WY)^{-\mu_1} s_{\mu}(A)s_{\mu}(B).
\end{align*}
We then evaluate the latter $\mu$-sum via Lemma~\ref{lem: partialcauchywithextrafactorrecipe} and arrive at
\begin{align*}
& \sum_{\substack{\mu,\nu \\ \mu_1+\nu_1\leq N}} s_{\mu}(A)s_{\mu}(B) s_{\nu}(C)s_{\nu}(D) \\
& = \sum_{\substack{W\subseteq C\\  Y\subseteq D \\ |W|=|Y|}} (WY)^{N} Z( C\smallsetminus W \cup \tfrac{1}{Y}, D\smallsetminus Y \cup \tfrac{1}{W}) \sum_{a\in A} \sum_{b\in B} \prod_{\substack{\hat{a}\in A \\ \hat{a}\neq a}} \left(1-\frac{\hat{a}}{a}\right)^{-1} \prod_{\substack{\hat{b}\in B \\ \hat{b}\neq b}} \left(1-\frac{\hat{b}}{b}\right)^{-1} \\
& \ \ \ \ \times \sum_{0\leq m\leq N} \frac{(ab)^m}{(WY)^m} \sum_{\substack{Q\subseteq A\smallsetminus \{a\} \\ R\subseteq B\smallsetminus \{b\} \\ |Q|=|R|}} (QR)^m Z(A\smallsetminus \{a\}\smallsetminus Q \cup \tfrac{1}{R}, B\smallsetminus \{b\}\smallsetminus R \cup \tfrac{1}{Q}).
\end{align*}
We may evaluate the $m$-sum by using the formula for a sum of a geometric series to deduce that
\begin{align*}
\sum_{0\leq m\leq N}  \frac{(ab)^m(QR)^m}{(WY)^m}  =  \left(1- \frac{ab(QR)^1}{(WY)^1}\right)^{-1} +  \frac{(ab)^N(QR)^N}{(WY)^N}  \left( 1- \frac{(WY)^1}{ab(QR)^1} \right)^{-1}
\end{align*}
and thus
\begin{equation*}
\sum_{\substack{\mu,\nu \\ \mu_1+\nu_1\leq N}} s_{\mu}(A)s_{\mu}(B) s_{\nu}(C)s_{\nu}(D) = \Upsilon  + \Sigma_2,
\end{equation*}
where
\begin{align}
\Upsilon :=
& \sum_{\substack{W\subseteq C\\  Y\subseteq D \\ |W|=|Y|}} (WY)^{N} Z( C\smallsetminus W \cup \tfrac{1}{Y}, D\smallsetminus Y \cup \tfrac{1}{W}) \sum_{a\in A} \sum_{b\in B} \prod_{\substack{\hat{a}\in A \\ \hat{a}\neq a}} \left(1-\frac{\hat{a}}{a}\right)^{-1} \prod_{\substack{\hat{b}\in B \\ \hat{b}\neq b}} \left(1-\frac{\hat{b}}{b}\right)^{-1} \notag\\
& \times  \sum_{\substack{Q\subseteq A\smallsetminus \{a\} \\ R\subseteq B\smallsetminus \{b\} \\ |Q|=|R|}} \left( 1- \frac{ab(QR)^1}{(WY)^1}\right)^{-1} Z(A\smallsetminus \{a\}\smallsetminus Q \cup \tfrac{1}{R}, B\smallsetminus \{b\}\smallsetminus R \cup \tfrac{1}{Q}) \label{eqn: upsilondef}
\end{align}
and $\Sigma_2$ is as defined in the statement of Theorem~\ref{thm: unitaryidentity}. Now the sum of the terms in \eqref{eqn: upsilondef} with $|W|>0$ is precisely the sum $\Sigma_1$ in the statement of Theorem~\ref{thm: unitaryidentity}. Thus, to complete the proof of Theorem~\ref{thm: unitaryidentity}, it is left to evaluate the sum of the terms in \eqref{eqn: upsilondef} with $|W|=|Y|=0$. Call this sum $\Upsilon_0$, so that
\begin{align}
\Upsilon_0 :=
& Z( C, D) \sum_{a\in A} \sum_{b\in B} \prod_{\substack{\hat{a}\in A \\ \hat{a}\neq a}} \left(1-\frac{\hat{a}}{a}\right)^{-1} \prod_{\substack{\hat{b}\in B \\ \hat{b}\neq b}} \left(1-\frac{\hat{b}}{b}\right)^{-1} \notag \\
& \times \sum_{\substack{Q\subseteq A\smallsetminus \{a\} \\ R\subseteq B\smallsetminus \{b\} \\ |Q|=|R|}} \big( 1-  ab(QR)^1 \big)^{-1} Z(A\smallsetminus \{a\}\smallsetminus Q \cup \tfrac{1}{R}, B\smallsetminus \{b\}\smallsetminus R \cup \tfrac{1}{Q}). \label{eqn: upsilon0def}
\end{align}
To evaluate $\Upsilon_0$, we first assume that the elements of $A\cup B$ each have absolute value $<1$. The general result will follow from meromorphic continuation. With this assumption, we may write $( 1-  ab(QR)^1 )^{-1}$ as the sum of an infinite geometric series and deduce from \eqref{eqn: upsilon0def} that
\begin{align*}
\Upsilon_0 = & Z( C, D) \sum_{a\in A} \sum_{b\in B} \prod_{\substack{\hat{a}\in A \\ \hat{a}\neq a}} \left(1-\frac{\hat{a}}{a}\right)^{-1} \prod_{\substack{\hat{b}\in B \\ \hat{b}\neq b}} \left(1-\frac{\hat{b}}{b}\right)^{-1} \\
& \times \sum_{m=0}^{\infty} (ab)^m \sum_{\substack{Q\subseteq A\smallsetminus \{a\} \\ R\subseteq B\smallsetminus \{b\} \\ |Q|=|R|}}  (QR)^m Z(A\smallsetminus \{a\}\smallsetminus Q \cup \tfrac{1}{R}, B\smallsetminus \{b\}\smallsetminus R \cup \tfrac{1}{Q}).
\end{align*}
From this and Lemma~\ref{lem: partialcauchyrecipe}, we arrive at
\begin{equation}\label{eqn: upsilon0}
\Upsilon_0 = Z( C, D) \sum_{a\in A} \sum_{b\in B} \prod_{\substack{\hat{a}\in A \\ \hat{a}\neq a}} \left(1-\frac{\hat{a}}{a}\right)^{-1} \prod_{\substack{\hat{b}\in B \\ \hat{b}\neq b}} \left(1-\frac{\hat{b}}{b}\right)^{-1} \sum_{m=0}^{\infty} (ab)^m \sum_{\substack{\nu \\ \nu_1 \leq m}} s_{\nu}(A\smallsetminus\{a\})s_{\nu}(B\smallsetminus\{b\}).
\end{equation}
Now if $m$ is a nonnegative integer, $\nu=(\nu_1,\nu_2,\dots)$ is a partition with $\nu_1\leq m$, and $\mu$ is the partition $(m,\nu_1,\nu_2,\dots)$, then Lemma~\ref{lem: cofactorexpansion} with $\tau=\mu$, $\gamma=(m)$, and $\delta=\nu$ implies
\begin{equation*}
\sum_{a\in A} \prod_{\substack{\hat{a}\in A \\ \hat{a}\neq a}} \left(1-\frac{\hat{a}}{a}\right)^{-1} a^m s_{\nu} (A\smallsetminus \{a\}) = s_{\mu}(A)
\end{equation*}
and
\begin{equation*}
\sum_{b\in B}  \prod_{\substack{\hat{b}\in B \\ \hat{b}\neq b}} \left(1-\frac{\hat{b}}{b}\right)^{-1} b^m s_{\nu} (B\smallsetminus \{b\})  = s_{\mu}(B).
\end{equation*}
From these and \eqref{eqn: upsilon0}, we deduce that
\begin{equation*}
\Upsilon_0 =  Z( C, D)\sum_{m=0}^{\infty} \sum_{\substack{\nu \\ \nu_1 \leq m}}   s_{\mu}(A) s_{\mu}(B),
\end{equation*}
where $\mu=\mu(m,\nu)$ is the partition $(m,\nu_1,\nu_2,\dots)$ when $\nu=(\nu_1,\nu_2\dots)$. This simplifies to 
\begin{equation*}
\Upsilon_0 =  Z( C, D) \sum_{\mu} s_{\mu}(A) s_{\mu}(B).
\end{equation*}
Since each element of $A\cup B$ has absolute value $<1$, we may evaluate the $\mu$-sum using Cauchy's identity (see, for example, equation (4.3) in Chapter~I of \cite{macdonald}, which holds true when all the variables are complex numbers of moduli $<1$) and arrive at
\begin{equation*}
\Upsilon_0 =  Z(A,B)Z(C,D),
\end{equation*}
which is $\Sigma_0$ in the statement of Theorem~\ref{thm: unitaryidentity}. We have thus proved that $\Upsilon_0=\Sigma_0$ under the additional assumption that the elements of $A\cup B$ each have absolute value $<1$. It follows from this and meromorphic continuation that $\Upsilon_0=\Sigma_0$ regardless of the sizes of the elements in $A\cup B$ because $\Upsilon_0$ and $\Sigma_0$ are both rational functions by \eqref{eqn: upsilon0def} and \eqref{eqn: Zdef}. This completes the proof of Theorem~\ref{thm: unitaryidentity}.

\section{Using lower twisted moments to evaluate higher untwisted moments}\label{sec: recipe}

In this section, we use the BK Splitting and Theorem~\ref{thm: twistedrecipe} for lower twisted moments to prove \eqref{eqn: recipe} for higher (untwisted) moments.

\begin{lemma}\label{lem: linearindependence}
Let $L,K$ be positive integers. For each pair $U,V$ of sets such that $U\subseteq\{1,2,\dots,L\}$ and $V\subseteq \{1,2,\dots,K\}$, let $f_{U,V}(x_1,\dots,x_L;y_1,\dots,y_{K})$ be a complex-valued function that is entire as a function of each of the variables $x_1,\dots,x_L,y_1,\dots,y_{K}$. Suppose there is a positive integer $N_0$ such that
\begin{equation}\label{eqn: linearindependence}
\sum_{\substack{U \subseteq  \{1,\dots,L\} \\ V\subseteq \{1,\dots,K\}}} \Bigg( \prod_{i\in U} x_i \prod_{j\in V} y_j \Bigg)^N f_{U,V}(x_1,\dots,x_L;y_1,\dots,y_{K}) = 0
\end{equation}
for all integers $N\geq N_0$ and all complex numbers $x_1,\dots,x_L,y_1,\dots,y_{K}$. Then $f_{U,V}$ is the zero function for all $U,V$.
\end{lemma}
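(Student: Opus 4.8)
The plan is to prove Lemma~\ref{lem: linearindependence} by a generalized Vandermonde argument. For each pair of sets $U\subseteq\{1,\dots,L\}$ and $V\subseteq\{1,\dots,K\}$, write $m_{U,V}(x_1,\dots,x_L,y_1,\dots,y_K):=\prod_{i\in U}x_i\prod_{j\in V}y_j$ for the associated monomial. Distinct pairs $(U,V)$ yield distinct monomials, since $m_{U,V}$ determines both the set of variables $x_i$ occurring in it and the set of variables $y_j$ occurring in it; consequently $m_{U,V}-m_{U',V'}$ is a nonzero polynomial whenever $(U,V)\neq(U',V')$. Set $s:=2^{L+K}$, the number of such pairs, and let $Z\subseteq\mathbb{C}^{L+K}$ be the union of the coordinate hyperplanes $\{x_i=0\}$ and $\{y_j=0\}$ together with the hypersurfaces $\{m_{U,V}=m_{U',V'}\}$ over all pairs $(U,V)\neq(U',V')$. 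Each of these sets is the zero locus of a nonzero polynomial, so $Z$ is a proper algebraic subvariety of $\mathbb{C}^{L+K}$ and its complement $\mathbb{C}^{L+K}\smallsetminus Z$ is dense.

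Next I would fix a point $(x,y)=(x_1,\dots,x_L,y_1,\dots,y_K)$ in $\mathbb{C}^{L+K}\smallsetminus Z$ and prove that $f_{U,V}(x,y)=0$ for every pair $(U,V)$. Write $z_{U,V}:=m_{U,V}(x,y)$; by the definition of $Z$, the $s$ complex numbers $z_{U,V}$ are nonzero and pairwise distinct. Applying the hypothesis \eqref{eqn: linearindependence} with $N=N_0,N_0+1,\dots,N_0+s-1$ yields the linear system
\begin{equation*}
\sum_{U,V} z_{U,V}^{\,N}\, f_{U,V}(x,y)=0,\qquad N_0\le N\le N_0+s-1,
\end{equation*}
in the $s$ unknowns $f_{U,V}(x,y)$. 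Its coefficient matrix $\left(z_{U,V}^{\,N_0+\ell}\right)_{0\le\ell\le s-1}$ is the product of the diagonal matrix $\operatorname{diag}\left(z_{U,V}^{\,N_0}\right)$ with the Vandermonde matrix $\left(z_{U,V}^{\,\ell}\right)_{0\le\ell\le s-1}$; the latter is nonsingular because the $z_{U,V}$ are pairwise distinct, and the former is nonsingular because the $z_{U,V}$ are nonzero. Hence the coefficient matrix is invertible, so $f_{U,V}(x,y)=0$ for all $(U,V)$.

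It remains to upgrade this to identical vanishing. Each $f_{U,V}$ is entire in each of its variables, hence continuous (by Hartogs' theorem, or by a short induction on the number of variables: slice along hyperplanes $x_i=c$ or $y_j=c$ for $c$ outside a finite exceptional set and apply the one-variable identity theorem). A continuous function that vanishes on the dense set $\mathbb{C}^{L+K}\smallsetminus Z$ vanishes identically, so $f_{U,V}\equiv 0$ for every pair $(U,V)$, as claimed.

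The step I expect to require the most care is the choice of the exceptional set $Z$, specifically verifying that ``$z_{U,V}$ pairwise distinct'' holds on a dense set. This reduces to the elementary but essential fact that $m_{U,V}-m_{U',V'}$ is a nonzero polynomial for $(U,V)\neq(U',V')$, so that its zero locus is a proper subvariety rather than all of $\mathbb{C}^{L+K}$. The remaining ingredients --- nonsingularity of a scaled Vandermonde matrix and the passage from generic vanishing to identical vanishing of an entire function --- are routine.
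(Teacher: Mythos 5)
Your proof is correct, but it takes a genuinely different route from the paper's. You fix a single generic point at which all $2^{L+K}$ monomial values $z_{U,V}$ are nonzero and pairwise distinct, feed in finitely many consecutive values $N=N_0,\dots,N_0+2^{L+K}-1$, and invert a scaled Vandermonde matrix to conclude that every $f_{U,V}$ vanishes at that point; density of the good locus and regularity of the $f_{U,V}$ then give identical vanishing. The paper instead fixes a target pair $(U_0,V_0)$, restricts to the open polydomain where $|x_i|>1$ for $i\in U_0$, $|y_j|>1$ for $j\in V_0$, and $0<|x_i|,|y_j|<1$ otherwise, divides through by $\big(\prod_{i\in U_0}x_i\prod_{j\in V_0}y_j\big)^N$, and lets $N\to\infty$ so that every competing term decays to zero, leaving $f_{U_0,V_0}=0$ on an open set; analytic continuation finishes. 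The trade-offs: your Vandermonde argument isolates all $2^{L+K}$ functions simultaneously and uses only $2^{L+K}$ consecutive values of $N$ (so in fact it proves a slightly stronger statement, needing \eqref{eqn: linearindependence} for only finitely many $N$), while the paper's domination-and-limit argument isolates one $f_{U_0,V_0}$ per choice of region but needs no linear algebra beyond a comparison of absolute values. Both are clean; yours buys a marginal weakening of the hypothesis, the paper's avoids any discussion of exceptional loci because it works directly on an open set. One small remark: invoking Hartogs' theorem is more than you need, since the complement of your exceptional variety $Z$ is open, and a separately holomorphic function vanishing on a nonempty open set already vanishes identically by freezing all but one variable and iterating the one-variable identity theorem.
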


\begin{proof}
Fix an arbitrary pair $U_0,V_0$ of sets such that $U_0\subseteq\{1,2,\dots,L\}$ and $V_0\subseteq \{1,2,\dots,K\}$. Let $\mathcal{R}$ be the subset of $\mathbb{C}^{L+K}$ defined by
\begin{align*}
\mathcal{R}:= \{ (x_1,\dots,x_L,y_1,\dots,y_{K})
& \in \mathbb{C}^{L+K} \ : \ 
|x_i| >1 \text{ for all } i\in U_0, \ |y_j|>1 \text{ for all } j\in V_0,\\
& 0<|x_i|<1 \text{ for all } i\not\in U_0, \text{ and } 0<|y_j|<1 \text{ for all } j\not\in V_0 \}.
\end{align*}
Suppose that $(x_1,\dots,x_L,y_1,\dots,y_{K})\in \mathcal{R}$ and $N\geq N_0$ is an integer, so that \eqref{eqn: linearindependence} holds. Divide both sides of \eqref{eqn: linearindependence} by $\prod_{i\in U_0} x_i^N \prod_{j\in V_0} y_j^N$ to deduce that
\begin{equation}\label{eqn: linearindependence2}
\sum_{\substack{U \subseteq  \{1,\dots,L\} \\ V\subseteq \{1,\dots,K\}}} \Bigg(\frac{ \prod_{i\in U\smallsetminus U_0} x_i \prod_{j\in V\smallsetminus V_0} y_j}{\prod_{i\in U_0\smallsetminus U} x_i  \prod_{j\in V_0\smallsetminus V} y_j } \Bigg)^N f_{U,V}(x_1,\dots,x_L;y_1,\dots,y_{K}) = 0.
\end{equation}
Since $(x_1,\dots,x_L,y_1,\dots,y_{K})\in \mathcal{R}$, if $U\subseteq\{1,2,\dots,L\}$ and $V\subseteq \{1,2,\dots,K\}$ such that $U\neq U_0$ or $V\neq V_0$, then
\begin{align*}
\Bigg| \frac{ \prod_{i\in U\smallsetminus U_0} x_i \prod_{j\in V\smallsetminus V_0} y_j}{\prod_{i\in U_0\smallsetminus U} x_i  \prod_{j\in V_0\smallsetminus V} y_j } \Bigg| <1.
\end{align*}
Thus, taking $N\rightarrow \infty$ in \eqref{eqn: linearindependence2}, we deduce that
\begin{equation}\label{eqn: linearindependence3}
f_{U_0,V_0}(x_1,\dots,x_L;y_1,\dots,y_{K}) = 0
\end{equation}
for all $(x_1,\dots,x_L,y_1,\dots,y_{K})\in \mathcal{R}$. It follows from this and analytic continuation that \eqref{eqn: linearindependence3} holds for all $(x_1,\dots,x_L,y_1,\dots,y_{K})\in \mathbb{C}^{L+K}$. Since $U_0$ and $V_0$ are arbitrary, this proves the lemma.
\end{proof}

We now prove \eqref{eqn: recipe} for $|A|\geq 2$ and $|B|\geq 2$ via the BK Splitting and Theorem~\ref{thm: twistedrecipe}. For conciseness and ease of notation, we assume that $A$ and $B$ are finite sets of nonzero complex numbers such that $A$, $B$, $1/A$, and $1/B$ are pairwise disjoint. The more general result with $A$ and $B$ finite multisets of complex numbers will follow immediately from analytic continuation. The proof itself may be adapted to the general case by changing all set operations to their analogues for multisets and by counting multiplicities when taking sums or products involving multisets.

In our proof, we will use Theorem~\ref{thm: twistedrecipe} only for lower twisted moments, i.e., for twisted moments $\mathcal{M}_N(C,D;\lambda)$ with $|C|<|A|$ and $|D|<|B|$. We will also apply Theorem~\ref{thm: unitaryidentity}, which uses \eqref{eqn: recipe} implicitly in its proof (see the proofs of Lemmas~\ref{lem: partialcauchyrecipe} and \ref{lem: partialcauchywithextrafactorrecipe} in Section~\ref{sec: schurproofs}). Therefore our proof will also implicitly use \eqref{eqn: recipe}, but only for lower moments, i.e., for moments $\mathcal{M}_N(C,D;0)$ with $|C|<|A|$ and $|D|<|B|$.

Let $A=A_1\cup A_2$ be a partition of $A$ into disjoint nonempty subsets, and similarly, let $B=B_1\cup B_2$ be a partition of $B$ into disjoint nonempty subsets (such partitions exist because $|A|\geq 2$ and $|B|\geq 2$). Then Theorem~\ref{thm: BK} gives
\begin{equation*}
\mathcal{M}_N(A,B;0) = \sum_{\lambda} \mathcal{M}_N(A_1,B_1;\lambda) \mathcal{M}_N(A_2,B_2;-\lambda),
\end{equation*}
where the sum on the right-hand side is over all dominant weights $\lambda$ of length $N$. We evaluate the lower twisted moments $\mathcal{M}_N(A_1,B_1;\lambda)$ and $\mathcal{M}_N(A_2,B_2;-\lambda)$ using Theorem~\ref{thm: twistedrecipe} and arrive at 
\begin{align*}
\mathcal{M}_N(A,B;0) =
& \sum_{\lambda} \sum_{ \substack{U_1\subseteq A_1 \\ V_1\subseteq B_1 \\ |U_1|=|V_1| } } (U_1V_1)^N Z (A_1\smallsetminus U_1 \cup \tfrac{1}{V_1}, B_1\smallsetminus V_1 \cup \tfrac{1}{U_1})  \\
& \times s_{\mu'}(-(A_1\smallsetminus U_1 \cup \tfrac{1}{V_1}))s_{\nu'}(-(B_1\smallsetminus V_1 \cup \tfrac{1}{U_1})) \sum_{ \substack{U_2\subseteq A_2 \\ V_2\subseteq B_2 \\ |U_2|=|V_2| } } (U_2V_2)^N  \\
& \times Z (A_2\smallsetminus U_2 \cup \tfrac{1}{V_2}, B_2\smallsetminus V_2 \cup \tfrac{1}{U_2}) s_{\nu'}(-(A_2\smallsetminus U_2 \cup \tfrac{1}{V_2})) s_{\mu'}(-(B_2\smallsetminus V_2 \cup \tfrac{1}{U_2})),
\end{align*}
where $\mu$ and $\nu$ are the partitions depending on $\lambda$ and defined by \eqref{eqn: mudef} and \eqref{eqn: nudef} when $\lambda=(\lambda_1,\lambda_2,\dots,\lambda_N)$. As $\lambda$ runs through all dominant weights of length $N$, the variables $\mu,\nu$ run through all partitions such that the largest part $\mu'_1$ of the conjugate $\mu'$ of $\mu$ and the largest part $\nu'_1$ of the conjugate $\nu'$ of $\nu'$ have sum $\mu_1'+\nu_1'$ that is at most $N$. It follows that
\begin{align*}
\mathcal{M}_N(A,B;0) =
& \sum_{ \substack{U_1\subseteq A_1 \\ V_1\subseteq B_1 \\ |U_1|=|V_1| } }\sum_{ \substack{U_2\subseteq A_2 \\ V_2\subseteq B_2 \\ |U_2|=|V_2| } }  (U_1U_2)^N (V_1V_2)^N  Z (A_1\smallsetminus U_1 \cup \tfrac{1}{V_1}, B_1\smallsetminus V_1 \cup \tfrac{1}{U_1})\\
& \times Z (A_2\smallsetminus U_2 \cup \tfrac{1}{V_2}, B_2\smallsetminus V_2 \cup \tfrac{1}{U_2})  \sum_{\substack{ \mu,\nu \\ \mu_1'+\nu_1'\leq N}} s_{\mu'}(-(A_1\smallsetminus U_1 \cup \tfrac{1}{V_1}))\\
& \times s_{\nu'}(-(B_1\smallsetminus V_1 \cup \tfrac{1}{U_1})) s_{\nu'}(-(A_2\smallsetminus U_2 \cup \tfrac{1}{V_2}))s_{\mu'}(-(B_2\smallsetminus V_2 \cup \tfrac{1}{U_2})).
\end{align*}
Since $\mu'$ runs over all partitions as $\mu$ does, and the same is true for $\nu'$, we may relabel $\mu'$ as $\mu$ and $\nu'$ as $\nu$. Moreover, the fact that Schur polynomials are homogeneous implies that $s_{\mu}(-C)s_{\mu}(-D)=s_{\mu}(C)s_{\mu}(D)$ for any partition $\mu$ and any finite multisets $C,D$ of complex numbers. Therefore
\begin{align}
\mathcal{M}_N(A,B;0) =
& \sum_{ \substack{U_1\subseteq A_1 \\ V_1\subseteq B_1 \\ |U_1|=|V_1| } }\sum_{ \substack{U_2\subseteq A_2 \\ V_2\subseteq B_2 \\ |U_2|=|V_2| } }  (U_1U_2)^N (V_1V_2)^N  Z (A_1\smallsetminus U_1 \cup \tfrac{1}{V_1}, B_1\smallsetminus V_1 \cup \tfrac{1}{U_1}) \notag\\
& \times Z (A_2\smallsetminus U_2 \cup \tfrac{1}{V_2}, B_2\smallsetminus V_2 \cup \tfrac{1}{U_2})  \sum_{\substack{ \mu,\nu \\ \mu_1+\nu_1\leq N}} s_{\mu}(A_1\smallsetminus U_1 \cup \tfrac{1}{V_1}) \notag\\
& \times s_{\mu}(B_2\smallsetminus V_2 \cup \tfrac{1}{U_2}) s_{\nu}(B_1\smallsetminus V_1 \cup \tfrac{1}{U_1}) s_{\nu}(A_2\smallsetminus U_2 \cup \tfrac{1}{V_2}), \label{eqn: applyBKandtwistedrecipe}
\end{align}
where the $\mu,\nu$-sum is over all pairs $\mu,\nu$ of partitions such that the largest part $\mu_1$ of $\mu$ and the largest part $\nu_1$ of $\nu$ has sum $\leq N$.

Having applied the BK Splitting and evaluated the lower twisted moments via Theorem~\ref{thm: twistedrecipe}, we now apply Theorem~\ref{thm: unitaryidentity} to deduce from \eqref{eqn: applyBKandtwistedrecipe} that
\begin{equation*}
\mathcal{M}_N(A,B;0) = \mathcal{S}_0 + \mathcal{S}_1 + \mathcal{S}_2,
\end{equation*}
where $\mathcal{S}_0$, $\mathcal{S}_1$, and $ \mathcal{S}_2$ are defined by
\begin{align}
\mathcal{S}_0: =
& \sum_{ \substack{U_1\subseteq A_1 \\ V_1\subseteq B_1 \\ |U_1|=|V_1| } }\sum_{ \substack{U_2\subseteq A_2 \\ V_2\subseteq B_2 \\ |U_2|=|V_2| } }  (U_1U_2)^N(V_1V_2)^N \notag\\
& \times Z (A_1\smallsetminus U_1 \cup \tfrac{1}{V_1}, B_1\smallsetminus V_1 \cup \tfrac{1}{U_1})Z (A_2\smallsetminus U_2 \cup \tfrac{1}{V_2}, B_2\smallsetminus V_2 \cup \tfrac{1}{U_2}) \notag\\
& \times Z (A_1\smallsetminus U_1 \cup \tfrac{1}{V_1}, B_2\smallsetminus V_2 \cup \tfrac{1}{U_2})Z (A_2\smallsetminus U_2 \cup \tfrac{1}{V_2}, B_1\smallsetminus V_1 \cup \tfrac{1}{U_1}), \label{eqn: S0def}
\end{align}
\begin{align}
\mathcal{S}_1: =
& \sum_{ \substack{U_1\subseteq A_1 \\ V_1\subseteq B_1 \\ |U_1|=|V_1| } }\sum_{ \substack{U_2\subseteq A_2 \\ V_2\subseteq B_2 \\ |U_2|=|V_2| } }  (U_1U_2)^N(V_1V_2)^N \notag\\
& \times Z (A_1\smallsetminus U_1 \cup \tfrac{1}{V_1}, B_1\smallsetminus V_1 \cup \tfrac{1}{U_1})Z (A_2\smallsetminus U_2 \cup \tfrac{1}{V_2}, B_2\smallsetminus V_2 \cup \tfrac{1}{U_2}) \notag\\
& \times \sum_{\substack{W\subseteq A_2\smallsetminus U_2 \cup \tfrac{1}{V_2} \\ Y \subseteq B_1\smallsetminus V_1 \cup \tfrac{1}{U_1}\\ |W|=|Y| >0}} (WY)^{N} Z( (A_2\smallsetminus U_2 \cup \tfrac{1}{V_2}) \smallsetminus W \cup \tfrac{1}{Y}, (B_1\smallsetminus V_1 \cup \tfrac{1}{U_1})\smallsetminus Y \cup \tfrac{1}{W}) \notag\\
& \times \sum_{a\in A_1\smallsetminus U_1 \cup \tfrac{1}{V_1} } \sum_{b\in B_2\smallsetminus V_2 \cup \tfrac{1}{U_2}} \prod_{\substack{\hat{a}\in A_1\smallsetminus U_1 \cup \tfrac{1}{V_1} \\ \hat{a}\neq a}} \left(1-\frac{\hat{a}}{a}\right)^{-1} \prod_{\substack{ \hat{b}\in B_2\smallsetminus V_2 \cup \tfrac{1}{U_2} \\ \hat{b}\neq b}} \left(1-\frac{\hat{b}}{b}\right)^{-1} \notag\\
&  \times   \sum_{\substack{Q\subseteq A_1\smallsetminus U_1 \cup \tfrac{1}{V_1} \smallsetminus \{a\} \\ R \subseteq B_2\smallsetminus V_2 \cup \tfrac{1}{U_2} \smallsetminus \{b\} \\ |Q|=|R| }}  \left( 1- \frac{ab(QR)^1}{(WY)^1} \right)^{-1} \notag\\
& \times Z((A_1\smallsetminus U_1 \cup \tfrac{1}{V_1})\smallsetminus \{a\}\smallsetminus Q \cup \tfrac{1}{R} , (B_2\smallsetminus V_2 \cup \tfrac{1}{U_2})\smallsetminus \{b\}\smallsetminus R \cup \tfrac{1}{Q}), \label{eqn: S1def}
\end{align}
and
\begin{align*}
\mathcal{S}_2: =
& \sum_{ \substack{U_1\subseteq A_1 \\ V_1\subseteq B_1 \\ |U_1|=|V_1| } }\sum_{ \substack{U_2\subseteq A_2 \\ V_2\subseteq B_2 \\ |U_2|=|V_2| } }  (U_1U_2)^N(V_1V_2)^N \notag\\
& \times Z (A_1\smallsetminus U_1 \cup \tfrac{1}{V_1}, B_1\smallsetminus V_1 \cup \tfrac{1}{U_1})Z (A_2\smallsetminus U_2 \cup \tfrac{1}{V_2}, B_2\smallsetminus V_2 \cup \tfrac{1}{U_2}) \notag\\
& \times  \sum_{\substack{W\subseteq A_2\smallsetminus U_2 \cup \tfrac{1}{V_2} \\ Y \subseteq B_1\smallsetminus V_1 \cup \tfrac{1}{U_1}\\ |W|=|Y| }}   Z( (A_2\smallsetminus U_2 \cup \tfrac{1}{V_2})\smallsetminus W \cup \tfrac{1}{Y}, (B_1\smallsetminus V_1 \cup \tfrac{1}{U_1})\smallsetminus Y \cup \tfrac{1}{W}) \\
& \times \sum_{a\in A_1\smallsetminus U_1 \cup \tfrac{1}{V_1} } \sum_{b\in B_2\smallsetminus V_2 \cup \tfrac{1}{U_2}} \prod_{\substack{\hat{a}\in A_1\smallsetminus U_1 \cup \tfrac{1}{V_1} \\ \hat{a}\neq a}} \left(1-\frac{\hat{a}}{a}\right)^{-1} \prod_{\substack{ \hat{b}\in B_2\smallsetminus V_2 \cup \tfrac{1}{U_2} \\ \hat{b}\neq b}} \left(1-\frac{\hat{b}}{b}\right)^{-1} \\
&  \times   \sum_{\substack{Q\subseteq A_1\smallsetminus U_1 \cup \tfrac{1}{V_1} \smallsetminus \{a\} \\ R \subseteq B_2\smallsetminus V_2 \cup \tfrac{1}{U_2} \smallsetminus \{b\} \\ |Q|=|R| }}  ( ab)^N(QR )^{N}\left(  1- \frac{(WY)^1} {ab(QR)^1} \right)^{-1} \\
& \times Z((A_1\smallsetminus U_1 \cup \tfrac{1}{V_1})\smallsetminus \{a\}\smallsetminus Q \cup \tfrac{1}{R} , (B_2\smallsetminus V_2 \cup \tfrac{1}{U_2})\smallsetminus \{b\}\smallsetminus R \cup \tfrac{1}{Q}),
\end{align*}
respectively. If $C,D_1,D_2$ are finite sets of complex numbers with $D_1$ disjoint from $D_2$, then the definition \eqref{eqn: Zdef} implies that $Z(C,D_1)Z(C,D_2)=Z(C,D_1\cup D_2)$ and $Z(D_1,C)Z(D_2,C)=Z(D_1\cup D_2,C)$. Thus, since $\{A_1,A_2\}$ is a partition of the set $A$ and $\{B_1,B_2\}$ is a partition of the set $B$, we may make a change of variables $U=U_1\cup U_2$ and $V=V_1\cup V_2$ in \eqref{eqn: S0def} to deduce that
\begin{equation*}
\mathcal{S}_0 = \sum_{\substack{ U \subseteq A \\ V \subseteq B \\ |U| =|V| \\ |U\cap A_1| = |V\cap B_1| }} (UV)^N Z (A \smallsetminus U  \cup \tfrac{1}{V }, B \smallsetminus V  \cup \tfrac{1}{U }).
\end{equation*}
Thus, $\mathcal{S}_0$ is exactly the sum of the terms on the right-hand side of \eqref{eqn: recipe} that have $|U\cap A_1|=|V\cap B_1|$.

To finish the proof of \eqref{eqn: recipe}, it is left to show that $\mathcal{S}_1+\mathcal{S}_2$ equals the sum of the terms on the right-hand side of \eqref{eqn: recipe} that have $|U\cap A_1|\neq |V\cap B_1|$. We first show that $\mathcal{S}_1+\mathcal{S}_2$ equals a sum of terms of the form
\begin{equation*}
(UV)^N f_{U,V}
\end{equation*}
with $f_{U,V}$ a rational function of the elements of $A\cup B$ that is independent of $N$ and with $U,V$ sets such that $U\subseteq A$, $V\subseteq B$, $|U|=|V|$, and $|U\cap A_1|\neq |V\cap B_1|$. We see from \eqref{eqn: S1def} that $\mathcal{S}_1$ is a sum of terms, each of which is of the form
\begin{equation*}
(U_1U_2)^N (V_1V_2)^N (WY)^N f_{U_1,U_2,V_1,V_2,W,Y}
\end{equation*}
with $f_{U_1,U_2,V_1,V_2,W,Y}$ a rational function of the elements of $A\cup B$ that is independent of $N$. If $U_1$, $U_2$, $V_1$, and $V_2$ are sets that satisfy the conditions in \eqref{eqn: S1def}, then $(U_1U_2)^1$ is a product of factors, each of which is an element of $A$, and similarly $(V_1V_2)^1$ is a product of factors, each of which is an element of $B$. Moreover, $(U_1U_2)^1$ has the same number of factors as $(V_1V_2)^1$, and the number of factors in $(U_1U_2)^1$ that belong to $A_1$ is equal to the number of factors in $(V_1V_2)^1$ that belong to $B_1$. If $Y \subseteq B_1\smallsetminus V_1 \cup \frac{1}{U_1}$, then multiplying an element of $Y$ to the product $(U_1U_2)^1$ results to a product that either has one more factor belonging to $B_1$ or has one less factor belonging to $U_1$ and thus $A_1$. Similarly, if $W\subseteq A_2\smallsetminus U_2 \cup \frac{1}{V_2}$, then multiplying an element of $W$ to the product $(V_1V_2)^1$ results to a product that either has one more factor belonging to $A_2$ or has one less factor belonging to $V_2$ and thus $B_2$. Hence, for $U_1,U_2,V_1,V_2,W,Y$ satisfying the conditions in \eqref{eqn: S1def}, the number of factors in the product $(U_1U_2)^1(V_1V_2)^1(WY)^1$ that belong to $A$ is equal to the number of those that belong to $B$ because $|W|=|Y|$, and the number of factors that belong to $A_1$ is less than the number of those that belong to $B_1$ because $Y$ is nonempty. In other words, $\mathcal{S}_1$ equals a sum of terms of the form $(UV)^N f_{U,V}$ with $f_{U,V}$ a rational function of the elements of $A\cup B$ that is independent of $N$ and with $U,V$ sets such that $U\subseteq A$, $V\subseteq B$, $|U|=|V|$, and $|U\cap A_1|<|V\cap B_1|$. A similar argument shows that $\mathcal{S}_2$ equals a sum of terms of the form $(UV)^N f_{U,V}$ with $f_{U,V}$ a rational function of the elements of $A\cup B$ that is independent of $N$ and with $U,V$ sets such that $U\subseteq A$, $V\subseteq B$, $|U|=|V|$, and $|U\cap A_1|>|V\cap B_1|$. 

In summary, we have shown so far that if $N$ is a positive integer, $\{A_1,A_2\}$ is any partition of $A$, and $\{B_1,B_2\}$ is any partition of $B$, then
\begin{equation}\label{eqn: applyBK2}
\mathcal{M}_N(A,B;0)= \sum_{\substack{U \subseteq A \\ V\subseteq B \\ |U|=|V|}} (UV)^N f_{U,V},
\end{equation}
where each $f_{U,V}$ is a rational function of the elements of $A \cup B$ that is independent of $N$ such that if $|U\cap A_1| =|V\cap B_1|$, then
\begin{equation}\label{eqn: applyBK3}
f_{U,V} = Z (A \smallsetminus U  \cup \tfrac{1}{V }, B \smallsetminus V  \cup \tfrac{1}{U }).
\end{equation}
To show that this is also true if $|U\cap A_1| \neq |V\cap B_1|$, suppose that $U_0 \subseteq A$ and $V_0\subseteq B$ with $|U_0|=|V_0|$ and $|U_0\cap A_1| \neq |V_0\cap B_1|$. Then there exist partitions $\{A_1',A_2'\}$ of $A$ and $\{B_1',B_2'\}$ of $B$ such that $|U_0\cap A_1'|=|V_0\cap B_1'|$. Since \eqref{eqn: applyBK2} with \eqref{eqn: applyBK3} holds for arbitrary partitions of $A$ and $B$ into two nonempty subsets each, it follows that, for all positive integers $N$,
\begin{equation}\label{eqn: applyBK4}
\mathcal{M}_N(A,B;0)= \sum_{\substack{U \subseteq A \\ V\subseteq B \\ |U|=|V|}} (UV)^N g_{U,V},
\end{equation}
where each $g_{U,V}$ is a rational function of the elements of $A \cup B$ that is independent of $N$ such that if $|U\cap A_1'| =|V\cap B_1'|$, then
\begin{equation}\label{eqn: applyBK5}
g_{U,V} = Z (A \smallsetminus U  \cup \tfrac{1}{V }, B \smallsetminus V  \cup \tfrac{1}{U }).
\end{equation}
In particular, \eqref{eqn: applyBK5} holds when $U=U_0$ and $V=V_0$ because $|U_0\cap A_1'|=|V_0\cap B_1'|$. From \eqref{eqn: applyBK2} and \eqref{eqn: applyBK4}, we deduce that
\begin{equation*}
\sum_{\substack{U \subseteq A \\ V\subseteq B \\ |U|=|V|}} (UV)^N f_{U,V} = \sum_{\substack{U \subseteq A \\ V\subseteq B \\ |U|=|V|}} (UV)^N g_{U,V}
\end{equation*}
for all positive integers $N$. Clearing denominators and applying Lemma~\ref{lem: linearindependence}, we see that
\begin{equation*}
f_{U,V} = g_{U,V}
\end{equation*}
for all $U,V$. From this and \eqref{eqn: applyBK5} with $U=U_0$ and $V=V_0$, we arrive at
\begin{equation*}
f_{U_0,V_0} = Z (A \smallsetminus U_0  \cup \tfrac{1}{V_0 }, B \smallsetminus V_0  \cup \tfrac{1}{U_0 }).
\end{equation*}
Since $U_0$ and $V_0$ are arbitrary, it follows that \eqref{eqn: applyBK3} holds for all $U,V$ with $|U \cap A_1| \neq |V \cap B_1|$, and hence it holds for all $U,V$. From this and \eqref{eqn: applyBK2}, we conclude that
\begin{equation*}
\mathcal{M}_N(A,B;0)= \sum_{\substack{ U \subseteq A \\ V \subseteq B \\ |U| =|V|}} (UV)^N Z (A \smallsetminus U  \cup \tfrac{1}{V }, B \smallsetminus V  \cup \tfrac{1}{U }).
\end{equation*}
This proves \eqref{eqn: recipe}.

\appendix

\section{Proofs of Lemmas~\ref{lem: schurorthogonal} and \ref{lem: bumpgamburdlemma} }\label{sec: appendix}

\begin{proof}[Proof of Lemma~\ref{lem: schurorthogonal}]
This is a generalization of a well-known proof (see, for example, Section~15.1 of \cite{conrey}) of the fact that Schur polynomials are orthogonal over $U(N)$. The said proof applies verbatim to generalized Schur functions. The Weyl Integration Formula (see, for example, Proposition~17.4 of \cite{bump}) and the definition \eqref{eqn: genschurdef} imply that if $\lambda=(\lambda_1,\lambda_2,\dots,\lambda_N)$ and $\rho=(\rho_1,\rho_2,\dots,\rho_N)$ are dominant weights, then
\begin{equation*}
\mathcal{J}_N(\lambda,\rho) = \frac{1}{(2\pi)^N N!}\int_{[0,2\pi]^N} \det\Big(e^{i\theta_j(N-k+\lambda_k)} \Big)_{1\leq j,k\leq N} \det\Big(e^{-i\theta_j(N-k+\rho_k)} \Big)_{1\leq j,k\leq N} \,d\theta_1\cdots d\theta_N.
\end{equation*}
It follows from this and the Andr\'{e}ief identity~\cite{andreief} (for a more current reference, see Chapter~11 of \cite{lnv}, for instance) that
\begin{equation*}
\mathcal{J}_N(\lambda,\rho) =  \det\Bigg( \frac{1}{2\pi }\int_{0}^{2\pi} e^{i\theta (N-j+\lambda_j)} e^{-i\theta (N-k+\rho_k)}  \,d\theta \Bigg)_{1\leq j,k\leq N}.
\end{equation*}
Hence, if we let $\delta(x)=1$ for $x=1$ and $\delta(x)=0$ for all other $x$, then
\begin{equation}\label{eqn: schurorthogonal1}
\mathcal{J}_N(\lambda,\rho) =  \det\Big( \delta(k-j+\lambda_j-\rho_k)\Big)_{1\leq j,k\leq N}.
\end{equation}
Since $\lambda$ and $\rho$ are dominant weights, the quantity $k-j+\lambda_j-\rho_k$ is strictly increasing in $k$ for fixed $j$ and strictly decreasing in $j$ for fixed $k$. Thus, if $\lambda_1<\rho_1$, then all the entries in column $1$ (the entries with $k=1$) of the matrix in \eqref{eqn: schurorthogonal1} are zero, which implies that the determinant is zero. Similarly, if $\lambda_1>\rho_1$, then all the entries in row $1$ (the entries with $j=1$) are zero and the determinant is zero. Therefore $\mathcal{J}_N(\lambda,\rho)=0$ unless $\lambda_1=\rho_1$. In this latter case, all the entries that are in either row $1$ or column $1$ are zero except for the $1,1$ entry, which equals $1$, and the cofactor expansion of the determinant implies
\begin{equation*}
\mathcal{J}_N(\lambda,\rho) =  \det\Big( \delta(k-j+\lambda_j-\rho_k)\Big)_{2\leq j,k\leq N}.
\end{equation*}
By a similar argument, the latter determinant is zero unless $\lambda_2=\rho_2$. Continuing in this way, we see that $\mathcal{J}_N(\lambda,\rho)=0$ unless $\lambda=\rho$, in which case $\mathcal{J}_N(\lambda,\rho)=1$.
\end{proof}

\begin{proof}[Proof of Lemma~\ref{lem: bumpgamburdlemma}]
The Littlewood-Richardson rule (see, for example, (9.2) in Chapter~I of \cite{macdonald}) states that if $\lambda,\mu,\nu$ are partitions, then $c^{\lambda}_{\mu,\nu}$ equals the number of Littlewood-Richardson tableaux of shape $\lambda/\mu$ and content $\nu$. Since there are no skew tableaux of shape $\langle N^L \rangle/\mu$ unless $\mu \subset \langle N^L \rangle$, it follows that $c^{\langle N^L \rangle}_{\mu,\nu}$ equals $0$ unless the length of $\mu$ is at most $L$. Similarly, since $c^{\lambda}_{\mu,\nu}=c^{\lambda}_{\nu,\mu}$, the coefficient $c^{\langle N^L \rangle}_{\mu,\nu}$ equals $0$ unless the length of $\nu$ is at most $L$. Suppose now that $\mu$ and $\nu$ each have length at most $L$. We claim that there is exactly one Littlewood-Richardson tableau of shape $\langle N^L \rangle/\mu$. Indeed, the lattice permutation property together with the weakly increasing rows property forces the tableau to have only $1$'s at the topmost nonempty row. Immediately below that, the lattice permutation property, weakly increasing rows property, and strictly increasing columns property forces the tableau to have, from right to left, all $2$'s at the squares below the $1$'s of the top row, and then all $1$'s after that. Next, at the row immediately below that, from right to left, the tableau must have all $3$'s at the squares below the $2$'s, all $2$'s at the squares below the $1$'s, and all $1$'s at the squares with no squares above them. Continuing this process, we see that there is exactly one Littlewood-Richardson tableau of shape $\langle N^L \rangle/\mu$. Moreover, counting the $1$'s, $2$'s, $3$'s, etc., in this Littlewood-Richardson tableau shows that the tableau has content exactly
$$
(N-\mu_L, N-\mu_{L-1},\dots, N-\mu_1,0,0,\dots).
$$
We conclude that $c^{\langle N^L \rangle}_{\mu,\nu}$ is equal to $0$ unless this content equals $\nu$, in which case $c^{\langle N^L \rangle}_{\mu,\nu}=1$.
\end{proof}

\end{document}